 \definecolor{dark-red}{rgb}{0.4,0.15,0.15}
\DeclareMathOperator{\Tr}{Tr}
\DeclareMathOperator{\cosat}{cosat}
\DeclareMathOperator{\sat}{sat}
\DeclareMathOperator{\reft}{reflect}
\DeclareMathOperator{\coreft}{coreflect}
\newcommand{\op}{{\mathrm{op}}}
\newcommand{\refl}{{\mathrm{ref}}}
\newcommand{\Sub}{\operatorname{Sub}}
\newcommand{\SubMon}{\operatorname{SubMon}}
\newcommand{\id}{\operatorname{id}}
\newcommand{\Fac}{\operatorname{Fac}}
\newcommand{\End}{\operatorname{End}}
\newcommand{\clop}{\operatorname{\overline{\End}}}
\numberwithin{equation}{section} 
\theoremstyle{plain}
\newaliascnt{theorem}{equation}  
\newtheorem{theorem}[theorem]{Theorem}  
\newaliascnt{dodeca}{equation}  
 \theoremstyle{definition}
\newaliascnt{prop}{equation}  
\newtheorem{prop}[prop]{Proposition}
\newaliascnt{lemma}{equation}  
\newtheorem{lemma}[lemma]{Lemma}
\newaliascnt{corollary}{equation}  
\newtheorem{corollary}[corollary]{Corollary}
\newaliascnt{claim}{equation}  
\newaliascnt{conjecture}{equation}  
\newaliascnt{question}{equation}  
\newaliascnt{defn}{equation}  
\newtheorem{defn}[defn]{Definition}
\newaliascnt{example}{equation}  
\theoremstyle{remark}
\newaliascnt{remark}{equation}  
\newtheorem{remark}[remark]{Remark}
\newaliascnt{convention}{equation}  
\theoremstyle{plain}
\begin{document}
\title{Combinatorics of factorization systems on lattices}

\author[Bose]{Jishnu Bose}
\address{University of Southern California}
\email{jishnubo@usc.edu}

\author[Chih]{Tien Chih}
\address{Oxford College of Emory University}
\email{tien.chih@emory.edu}

\author[Housden]{Hannah Housden}
\address{Vanderbilt University}
\email{hannah.housden@vanderbilt.edu}

\author[Jones]{Legrand Jones II}
\address{Indiana University Bloomington}
\email{legjones@iu.edu}

\author[Lewis]{Chloe Lewis}
\address{University of Wisconsin-Eau Claire}
\email{lewischl@uwec.edu}

\author[Ormsby]{Kyle Ormsby}
\address{Reed College}
\email{ormsbyk@reed.edu}

\author[Rose]{Millie Rose}
\address{University of Kentucky}
\email{math@milliero.se}

\begin{abstract}
We initiate the combinatorial study of factorization systems on finite lattices, paying special attention to the role that reflective and coreflective factorization systems play in partitioning the poset of factorization systems on a fixed lattice. We ultimately uncover an intricate web of relations with such diverse combinatorial structures as submonoids, monads, Moore systems, transfer systems (from stable equivariant homotopy theory), and poly-Bernoulli numbers.
\end{abstract}

\maketitle

\setcounter{tocdepth}{1}
\tableofcontents

\section{Introduction}\label{sec:intro}
Orthogonal factorization systems are foundational structures in category theory originally intorduced by Mac Lane \cite{MacLane}. They allow every morphism in a category to be (essentially) uniquely factored as a `left' morphism followed by a `right' morphism, with surjective-injective factorizations of functions between sets serving as a prototype. Weak factorization systems play a similar role in homotopy theory, but relax the uniqueness condition; they arise in the foundational work of Quillen \cite{quillen} on model structures.

Orthogonal and weak factorization systems are identical notions in a poset category, where there is at most one morphism between any pair of objects. The present work studies the combinatorics of factorization systems in this context. The authors' interest in such structures arose from the connection between factorization systems on subgroup lattices and transfer systems from stable equivariant homotopy theory exposed in \cite{fooqw}. While these applications remain top-of-mind, we have chosen to center factorization systems due to their historical precedence and their link to diverse topics of independent mathematical interest. Indeed, factorization systems on a lattice $P$ are closely related to (co)monads on $P$, closure and interior operators on $P$, and submonoids of $(P,\vee)$ and of $(P,\wedge)$ (where $\vee$ and $\wedge$ are the join and meet operators, respectively).

Our primary connection to these other topics is a well-developed theory of characteristic and cocharacteristic functions for factorization systems. This enhances and dualizes the theory of characteristic functions for transfer systems developed in \cite{REUChar}. The presentation in this context is also particularly natural: given a factorization system $(L,R)$ on a lattice $P$, we have $P$-endomorphisms $\chi^{(L,R)}$ and $\lambda^{(L,R)}$ which take $x\in P$ to the factoring objects of the universal relations $0\to x$ and $x\to 1$, respectively, resulting in the diagram
\[
\begin{tikzpicture}
    \node (A) at (-2,0){$0$};
    \node (B) at (0,0){$x$};
    \node (C) at (2,0){$1$.};
    
    \node (D) at (-1,-1){$\chi^{(L,R)}(x)$};
    \node (E) at (1, 1){$\lambda^{(L,R)}(x)$};

    \draw[->] (A) to (B);
    \draw[->] (B) to (C);
    \draw[->, blue] (A) --node[below left]{$L$} (D);
    \draw[->, DarkRed] (D) --node[below right]{$R$} (B);
    \draw[->, blue] (B) --node[above left]{$L$} (E);
    \draw[->, DarkRed] (E) --node[above right]{$R$} (C);
\end{tikzpicture}
\]
In \autoref{prop:lambdaanti}, we demonstrate that $\chi,\lambda\colon \Fac P\to \End P$ are antitone maps of posets, and in \autoref{thm:fibers} we provide a full analyses of the images and fibers of these assignments.

Returning to our original motivation for this work, we note that our combinatorial analysis of factorization systems via (co)characteristic functions has immediate consequences for structured $G$-equivariant ring spectra, where $G$ is a finite Abelian group. In this scenario, work of Blumberg--Hill \cite{BlumbergHill}, Rubin \cite{Rubin}, and Franchere--Ormsby--Osorno--Qin--Waugh \cite{fooqw} implies that the homotopy category of $G$-$N_\infty$ operads is equivalent to  the poset of factorization systems on the lattice of subgroups of $G$. To the equivariant-minded reader, we emphasize that our results provide structural and combinatorial control over the homotopy theory of $N_\infty$ operads.

\subsection*{Overview}
In \autoref{sec:fac}, we review definitions and basic properties of (orthogonal) factorization systems and recall their equivalence with transfer systems from \cite{fooqw}. \autoref{sec:cochar} contains the main contribution of this paper, defining both characteristic $\chi$ and cocharacteristic $\lambda$ functions for factorization systems. In \autoref{thm:fibers}, we prove that these surject onto closure and interior operators, respectively, providing a fundamental link to other flavors of combinatorics and order theory. We also show that the fibers of $\chi$ and $\lambda$ are bounded intervals whose maxima (respectively, minima) are coreflective (respectively, reflective) factorization systems.

In \autoref{sec:crypto}, we briefly exhibit how (co)reflective factorization systems can be recast as a number of other structures. None of the content in this section is particularly original, but the results are scattered or folklore and we feel it will benefit researchers to collate them in one place. We hope that our factorization and transfer system perspective will inspire further advances.

\subsection*{Acknowledgments}
Our work on this project was supported by the AMS Mathematics Research Communities Program Homotopical Combinatorics and NSF grant DMS--1916439. K.O.~was also supported in part by NSF grant DMS--2204365.

The authors thank Scott Balchin and Ethan MacBrough for sharing their unpublished work (joint with K.O.) with the rest of this group. A number of the observations from \autoref{sec:crypto} --- including the first-observed bijection between between disklike transfer systems and closure systems via a Galois connection --- are due to those authors.

\section{Factorization and transfer systems on lattices}\label{sec:fac}
We briefly review necessary definitions and results about factorization systems and transfer systems.

\subsection{Factorization systems}
Loosely speaking, a factorization system on a category $\mathbf{C}$ consists of a pair of classes of morphisms $(L,R)$ so that every morphism factors essentially uniquely as a morphism in $L$ followed by a morphism in $R$. This is codified by a lifting criterion as follows:

\begin{defn}\label{def:lift}
    Given a category $\mathbf C$ and morphisms $i\colon A\to B$ and $p\colon X\to Y$ in $\mathbf C$, consider diagrams in $\mathbf C$ of the form

    \[
    \begin{tikzpicture}
    \node (A) at (0,2){$A$};
    \node (B) at (2,2){$X$};
    \node (C) at (0,0){$B$};
    \node (D) at (2,0){$Y$.};

    \draw[->] (A) --node[above]{\small $ f$} (B);
    \draw[->] (C) --node[below]{\small $g$} (D);
    \draw[->, DarkRed] (B) --node[right]{\small$p$} (D);
    \draw[->, blue] (A) --node[left]{\small$i$} (C);
    \draw[->, dashed] (C)--node[above left]{\small $\lambda$} (B);
\end{tikzpicture}
    \]
    If for all $f\colon A\to X$ and $g\colon B\to Y$ making the outer square commute, there exists a \textbf{lift} $\lambda$ making the triangles commute, then $i$ is said to satisfy the \textbf{left lifting property} with respect to $p$, and $p$ satisfies the \textbf{right lifting property} with respect to $i$.
\end{defn}

Since we will specifically be working with poset categories and not general categories, we can say that for any given $A, B, X, Y\in P$, then there is at most one choice for morphisms $i,p,f,g, \lambda$.  Suppose then that $A\leq X, B\leq Y$, then the above square commutes, and $B\leq X$ if and only if the relations $A\leq B, X\leq Y$ satisfy the respective lifting properties with each other. 

\begin{defn}\label{def:liftclasses}
    Let $\mathbf{C}$ be a category and let $M$ denote a class of morphisms of $\mathbf C$.  Then we define
    \[^\bot M:=\{i:i \text{ satisfies the left lifting property for each }p\in M\}\] and 
    \[ M^\bot:=\{p:p  \text{ satisfies the right lifting property for each }i\in M\}.\]
    Note that for another class of morphisms $N$, we have $M\subseteq {}^\bot N$ if and only if $N\subseteq M^\bot$ and we write $N\perp M$ when this holds.
\end{defn}

\begin{defn}\label{def:WFS}
    Given a category $\mathbf C$, a \textbf{weak factorization system} on $\mathbf C$ is a pair of collections of morphisms $(L, R)$, such that:

    \begin{itemize}
        \item For each morphism $f$ of $C$, we have that $f=pi$ for some $i\in L, p\in R$.
        \item $L= {}^\bot R$ and $R=L^\bot$.
    \end{itemize}

    For a pair $(L, R)$ where the lift $\lambda$ from \autoref{def:lift} is always unique, we say that $(L, R)$ is an \textbf{orthogonal factorization system}.
\end{defn}

Note that in a poset category $P$, there is at most one morphism between any pair of objects, so weak factorization systems are automatically orthogonal. We write $\Fac(P)$ for the collection of all factorization systems on $P$ and give it a partial order with $(L,R)\le (L',R')$ if and only if $R\subseteq R'$ (equivalently, $L'\subseteq L$).

\subsection{Transfer systems}

Originally introduced by Rubin \cite{Rubin}, transfer systems are a combinatorial structure on a subgroup lattice $\Sub(G)$ encoding the homotopy category of $G$-$N_\infty$ operads from equivariant infinite loop space theory. In this work, we replace $\Sub(G)$ with an arbitrary finite lattice and ignore the conjugation action on $\Sub(G)$ needed in non-Abelian equivariant contexts. For a recent survey of both the combinatorics of transfer systems and their relevance to equivariant homotopy theory, the reader should consult \cite{notices_transfer}. 

\begin{defn}\label{def:transfersystem}
Given a finite lattice $P$, a \textbf{transfer system} $R$ is a partial order of $P$ that refines the underlying partial order of $P$ (\emph{i.e.}, $x~R~y\implies x\le y$) and whenever there are $x,y,z\in P$ such that $x\leq z$ and $y~R~z$, it follows that $x\wedge y~R~y$. 
\end{defn}

Considering $P$ as a category, this definition says that a transfer system is a wide subcategory of $P$ that is stable under pullbacks. Using this viewpoint, \cite{fooqw} showed that the transfer systems $R$ of a finite lattice $P$ are in fact the right morphisms of factorization systems of $P$ as a category.

We write $\Tr(P)$ for the collection of all transfer systems on $P$ and partially order it by $R\le R'$ if and only if $x~R~y\implies x~R'~y$. If we view $R$ and $R'$ as collections of morphisms, this is just the inclusion ordering.

\begin{theorem}[\cite{fooqw}]\label{thm:fooqw}
Let $P$ be a finite lattice.  Then there are inverse poset isomorphisms
\[
\begin{aligned}
  \Fac(P) &\longleftrightarrow \Tr(P)\\
  (L,R)&\longmapsto R\\
  ({}^\perp R,R)&\mathbin{\text{\reflectbox{$\longmapsto$}}} (L,R).
\end{aligned}
\]
\end{theorem}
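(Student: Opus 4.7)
My plan is to construct the inverse maps $\Phi\colon \Fac(P) \to \Tr(P)$ and $\Psi\colon \Tr(P)\to \Fac(P)$ as in the statement, verify well-definedness in each direction, and then check mutual inversion and order preservation. Order preservation follows from $R \subseteq R' \Rightarrow {}^\perp R' \subseteq {}^\perp R$ combined with the definition $(L,R) \le (L',R') \iff R \subseteq R'$. Mutual inversion reduces to two observations: $\Phi\Psi(R)=R$ by construction, and $\Psi\Phi(L,R)=(L,R)$ because any factorization system satisfies $L = {}^\perp R$ by \autoref{def:WFS}.

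For well-definedness of $\Phi$, I would invoke the standard closure properties of right classes of factorization systems: $R$ contains all identities, is closed under composition, and is stable under pullback along arbitrary morphisms. Translated to the poset setting, these give reflexivity, transitivity, and the pullback-stability axiom of a transfer system respectively (pullbacks in a lattice being realized by meets), so $R \in \Tr(P)$.

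The substance of the proof is in showing $\Psi$ is well-defined, i.e.\ that $({}^\perp R, R)$ is a factorization system for any transfer system $R$. To construct factorizations, given $x \le y$ in $P$ I would let $w$ be the minimum of
\[
S := \{z \in P : x \le z \le y,\ z\,R\,y\}.
\]
This minimum exists because $y \in S$, $P$ is finite, and $S$ is closed under binary meets: if $z_1,z_2 \in S$, pullback-stability yields $z_1 \wedge z_2 \,R\, z_2$, and transitivity with $z_2\,R\,y$ gives $z_1 \wedge z_2 \,R\, y$. The claim $x \to w \in {}^\perp R$ then reduces to the implication ``$x \le a$, $w \le b$, and $a\,R\,b$ imply $w \le a$'': pullback-stability gives $a \wedge w \,R\, w$, transitivity with $w \,R\, y$ places $a \wedge w \in S$, and minimality forces $w \le a \wedge w$, i.e.\ $w \le a$. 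Finally, $R = ({}^\perp R)^\perp$ follows from the standard retract argument: any $f \in ({}^\perp R)^\perp$, once factored as $p \circ i$ with $i \in {}^\perp R$ and $p \in R$, admits a lift against the resulting square (with $\id$ on top and $p$ on the bottom), which in the poset setting forces $i$ to be an identity, so $f = p \in R$. The main technical obstacle is the minimum construction of $w$, which uses finiteness of $P$ and the pullback-stability axiom in an essential way; once that is in hand, the lifting and orthogonality verifications go through mechanically.
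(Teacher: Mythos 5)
Your proof is correct. The paper does not actually prove this statement --- it is quoted from \cite{fooqw} --- but your argument (factor $x\le y$ through the minimum of the nonempty, meet-closed set $\{z : x\le z\le y,\ z\,R\,y\}$, verify the left leg lies in ${}^\perp R$ by minimality, and recover $R=({}^\perp R)^\perp$ via the poset form of the retract argument, where the lift forces $w=x$ by antisymmetry) is the standard one and matches the proof given in that reference.
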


By \cite{Rubin}, every transfer system induced by a linear isometries is operad is saturated, a notion we define presently.

\begin{defn}\label{defn:saturated}
A transfer system $R$ on a finite lattice $P$ is \textbf{saturated} when $x~R~y\le z$ and $x~R~z$ implies that $x~R~z$. This is equivalent to $R$ satisfying the 3-for-2 property.
\end{defn}

Prior work has been done to enumerate saturated transfer systems  on certain families of modular lattices.  We review these results here.

\begin{defn}
A \textbf{modular lattice} is a lattice $(M, \leq )$ where all $a, b, x\in M$ satisfy the \textbf{modular law}:
\[
a \leq b \implies a \vee (x \wedge b) = (a \vee x) \wedge b.
\]
\end{defn}

It is well-known that a lattice is modular if and only if it contains no pentagonal sublattices or, equivalently, satisfies the \emph{diamond isomorphism theorem}: $[x\wedge y,y]\cong [x,x\vee y]$ via $z\mapsto z\vee y$ and $z\wedge y\mapsfrom z$. From the perspective of equivariant homotopy theory, subgroup lattice of Abelian (or Dedekind) groups provide important examples of modular lattices.

\begin{defn}[\cite{REUChar}]\label{defn:satcov}
Let  $(M, \leq )$ be a modular lattice and let $Q\subseteq {\le}$ be a subset of covering relations\footnote{A relation $x\le y$ is called \emph{covering} when $x<y$ and $x\le z\le y$ implies $z=x$ or $y$; in other words, covering relations are minimal non-identity relations in $(M,\le)$.} for $M$. We call $Q$ a \textbf{saturated cover} when the following conditions hold:
\begin{enumerate}
    \item[(1)] For $x, y\in M$, if $x ~Q ~(x \vee y)$ then $(x \wedge y) ~Q~ y$.
    \item[(2)] Suppose that $x$ and $y$ are covered by $x\vee y$ and cover $x\wedge y$ (we call such a tetrad a \emph{covering diamond}); if three of the four covering relations between $x, y, x \wedge y$, and $x \vee y$ are in $Q$, then the fourth covering relation is in $Q$ as well.
\end{enumerate} 
\end{defn}

In \cite{REUChar}, a characterization of the saturated transfer systems were given in terms of saturated covers.

\begin{theorem}[\cite{REUChar}]\label{thm:matchstick}
Let $Q$ be a subset of covering relations on a finite modular lattice $(M, \leq )$. Then, $Q$ is the set of covering relations within a saturated transfer system if and only if $Q$ is a saturated cover, and this correspondence provides a bijection between saturated covers and saturated transfer systems on $M$.
\end{theorem}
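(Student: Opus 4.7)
The plan is to prove the correspondence by constructing an explicit inverse pair: covers of a saturated transfer system yield a saturated cover, and the reflexive transitive closure of a saturated cover yields a saturated transfer system.

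For the forward direction, given a saturated transfer system $T$ with cover set $Q$, I would verify the two axioms of \autoref{defn:satcov}. Condition (1) follows by applying pullback stability of $T$ to the relation $x \mathbin{T} (x \vee y)$ along $y \leq x \vee y$, yielding $x \wedge y \mathbin{T} y$; the diamond isomorphism theorem for modular lattices transports the cover $x \lessdot x \vee y$ to a cover $x \wedge y \lessdot y$, so this relation lies in $Q$. Condition (2) splits into cases based on which cover of the covering diamond is missing: if a ``bottom'' cover is missing, it follows from pullback stability applied to the opposite top cover; if a ``top'' cover is missing, it follows from saturation applied to the composite chain from the bottom vertex through the opposite corner.

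For the reverse direction, I would define $T$ to be the reflexive transitive closure of $Q$ and verify that $T$ is a saturated transfer system with cover set exactly $Q$. The cover identification is immediate because any $Q$-chain of length at least two factors through an intermediate element. Pullback stability is established by induction on the length of a $Q$-chain witnessing $y \mathbin{T} z$: for a single cover $y_i \mathbin{Q} y_{i+1}$ and $x \leq y_{i+1}$, modularity forces $x \wedge y_i$ and $x \wedge y_{i+1}$ to be either equal or related by a cover, and in the covering case, condition (1) applied to $y_i \mathbin{Q} (y_i \vee (x \wedge y_{i+1})) = y_i \mathbin{Q} y_{i+1}$ produces the needed cover in $Q$. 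Saturation requires a subtler induction on the length $n$ of a chain $x = c_0 \mathbin{Q} \cdots \mathbin{Q} c_n = z$: in the key case where $y \not\leq c_{n-1}$, I would apply condition (1) to the cover $c_{n-1} \mathbin{Q} z$ to obtain $(y \wedge c_{n-1}, y) \in Q$, use pullback stability to derive $x \mathbin{T} (y \wedge c_{n-1})$, invoke the inductive hypothesis on the shorter chain to deduce $(y \wedge c_{n-1}) \mathbin{T} c_{n-1}$ (which must in fact lie in $Q$ since it is a cover in $M$), and finally close the diamond $y \wedge c_{n-1}, y, c_{n-1}, z$ via condition (2) to conclude $(y, z) \in Q$.

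The bijection then follows from the cover identification together with the observation that any saturated transfer system on a finite lattice is the transitive closure of its own cover set. The main obstacle I anticipate is the saturation argument just sketched: whereas pullback stability requires only condition (1) and a single induction on chain length, saturation must coordinate two chains, derive auxiliary covers via condition (1) and the inductive hypothesis, and finally close a diamond via condition (2), with the modular lattice structure doing essential work throughout to guarantee that each derived relation is a genuine cover.
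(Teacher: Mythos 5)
This theorem is imported from \cite{REUChar} and not reproved in the present paper, so your proposal can only be judged on its own terms. Your forward direction is correct, and your pullback-stability argument for the reverse direction (meeting the chain $c_i$ with $x$, using the modular law to see that each step $x\wedge c_i\le x\wedge c_{i+1}$ is an equality or a cover, and invoking condition (1)) is sound. The genuine gap is in the saturation step. In your key case $y\not\le c_{n-1}$ you form the tetrad $y\wedge c_{n-1},\,y,\,c_{n-1},\,z$ and assert that $(y\wedge c_{n-1})\,T\,c_{n-1}$ ``must in fact lie in $Q$ since it is a cover in $M$,'' then close the diamond with condition (2) to get $(y,z)\in Q$. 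But by the diamond isomorphism $[y\wedge c_{n-1},c_{n-1}]\cong[y,z]$, so $y\wedge c_{n-1}\lessdot c_{n-1}$ holds \emph{only when} $y\lessdot z$, which is not given; when it fails, your tetrad is not a covering diamond, condition (2) does not apply, and the intended conclusion $(y,z)\in Q$ is not even well-formed since $(y,z)$ is not a covering relation. Concretely, on the grid $[2]\times[1]$ with $Q$ all covers, $x=(0,0)$, $y=(0,1)$, and the chain $(0,0)\,Q\,(1,0)\,Q\,(2,0)\,Q\,(2,1)=z$, your tetrad is $\{(0,0),(0,1),(2,0),(2,1)\}$, in which two of the four sides have length $2$.

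The repair requires an extra layer of structure. First reduce to the case $x\lessdot y$, $x\,Q\,y$, by peeling off the first link of a $Q$-chain from $x$ to $y$ and inducting on its length. Then, with $y\wedge c_{n-1}=x$, run a ladder up the whole chain: set $d_i=c_i\vee y$, note $c_{n-1}\wedge d_i=c_i\vee(c_{n-1}\wedge y)=c_i$ by modularity, so condition (1) applied to the single cover $c_{n-1}\,Q\,z$ (with $b=d_i$) yields $c_i\,Q\,d_i$ for every $i$; each square $\{c_i,c_{i+1},d_i,d_{i+1}\}$ with $d_i\ne d_{i+1}$ is then a genuine covering diamond carrying three $Q$-covers ($c_i\,Q\,c_{i+1}$, $c_i\,Q\,d_i$, $c_{i+1}\,Q\,d_{i+1}$), and condition (2) gives $d_i\,Q\,d_{i+1}$, whence $y=d_0\,T\,d_{n-1}=z$. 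Your single-diamond argument is the $n=2$ instance of this ladder; for larger $n$ the intermediate rungs are essential. The remaining ingredients of your bijection (covers of the transitive closure of $Q$ are exactly $Q$; a saturated transfer system is the transitive closure of its covering relations, which follows from pullback stability plus saturation applied along any maximal chain) are correct as stated.
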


Matchstick games were used by Hafeez--Marcus--Ormsby--Osorno \cite{hmoo} to count saturated transfer systems on rectangular lattices; see  \autoref{rmk:pB}. In \cite{REUChar}, the authors complete a similar enumeration on lattices of the form $[2]^{*n}$ --- that is, the $n$-fold parallel composition of the chain $[2]=\{0<1<2\}$ with itself; this lattice has a minimal element, maximal element, and $n$ intermediate incomparable elements.

To conclude this section, we isolate one additional class of transfer systems that will play an important role in our work.

\begin{defn}\label{defn:disklike}
We say that a transfer system $R$ on a finite lattice $P$ is \textbf{generated} by a subset $S$ of $\le$ when $R$ is the smallest transfer system containing the relations $S$. In this case, we write $R = \langle S\rangle$.

A transfer system $R$ on $P$ is \textbf{disklike} (or \textbf{cosaturated}) when it is generated by relations of the form $x~R~1$, \emph{i.e.},
\[
  R = \langle R/1\rangle.
\]
\end{defn}

In \cite{Rubin}, Rubin shows that every transfer system induced by a Steiner (or little disks) operad is disklike, explaining the terminology. While it is not obvious, saturated and disklike transfer systems participate in a formal duality which we make explicit in \autoref{cor:satdisk}.

\section{Characteristic and cocharacteristic functions}\label{sec:cochar}

We now come to the main construction of this paper, that of the characteristic function $\chi^{(L,R)}$ and cocharacteristic function $\lambda^{(L,R)}$ of a factorization system $(L,R)$. When transported to transfer systems via \autoref{thm:fooqw}, we will see that $\chi^{(L,R)}$ recovers the characteristic function $\chi^R$ of \cite{REUChar}.

\begin{remark}
We note that many of the below constructions can be made in the context of a general bicomplete category $\mathbf C$, but for ease of exposition we specialize to (the category induced by) a finite lattice $P$ throughout.
\end{remark}

\begin{defn}
Suppose $P$ is a finite lattice with minimal element $0$ and maximal element $1$. Let $(L,R)$ be a factorization system on $P$. The \textbf{characteristic function} $\chi^{(L,R)}\colon P\to P$ and \textbf{cocharacteristic function} $\lambda^{(L,R)}\colon P\to P$ are the maps taking an object $x$ of $P$ to the factoring objects $\chi^{(L,R)}(x)$ and $\lambda^{(L,R)}(x)$, respectively, in the diagram
\[
\begin{tikzpicture}
    \node (A) at (-2,0){$0$};
    \node (B) at (0,0){$x$};
    \node (C) at (2,0){$1$.};
    
    \node (D) at (-1,-1){$\chi^{(L,R)}(x)$};
    \node (E) at (1, 1){$\lambda^{(L,R)}(x)$};

    \draw[->] (A) to (B);
    \draw[->] (B) to (C);
    \draw[->, blue] (A) --node[below left]{$L$} (D);
    \draw[->, DarkRed] (D) --node[below right]{$R$} (B);
    \draw[->, blue] (B) --node[above left]{$L$} (E);
    \draw[->, DarkRed] (E) --node[above right]{$R$} (C);
\end{tikzpicture}
\]
\end{defn}

In other words, $\chi^{(L,R)}(x)$ is the factoring object for the initial map $0\to x$ and $\lambda^{(L,R)}(x)$ is the factoring object for the terminal map $x\to 1$. It is a straightforward consequence of the defintions that both $\chi^{(L,R)}$ and $\lambda^{(L,R)}$ are monotone endomorphisms of $P$.

There is a natural anti-isomorphism of posets $\Fac(P)\to \Fac(P^{\op})$ given by $(L,R)\mapsto (R^\op,L^\op)$. It is an immediate consequence of the definitions that this duality plays nicely with characteristic and cocharacteristic functions:

\begin{prop}\label{prop:dualchar}
    Let $P$ be a finite lattice and let $(L,R)$ be a factorization system on $P$.  Then
\[
\begin{aligned}
  \chi^{(L,R)} &= \lambda^{(R^\op, L^\op)},\\
  \lambda^{(L,R)} &= \chi^{(R^\op, L^\op)}.
\end{aligned}
\]
\hfill\qedsymbol
\end{prop}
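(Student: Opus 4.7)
The plan is to simply unravel the definitions and check that the two factorization diagrams on $P$ and $P^{\op}$ match after applying the duality. The anti-isomorphism $(L,R)\mapsto (R^{\op},L^{\op})$ exchanges the left and right classes while reversing the direction of every arrow. At the same time, passage to $P^{\op}$ swaps the initial and terminal objects: $0_{P^\op}=1_P$ and $1_{P^\op}=0_P$. The heart of the argument is that these two swaps conspire to interchange the roles of ``factor of an initial map'' and ``factor of a terminal map.''

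Concretely, I would start with the second equation. By definition, $\chi^{(R^\op,L^\op)}(x)$ is the factoring object in $P^\op$ for the initial map $0_{P^\op}\to x$, giving a factorization
\[
1_P=0_{P^\op}\xrightarrow{R^\op}\chi^{(R^\op,L^\op)}(x)\xrightarrow{L^\op} x
\]
in $P^\op$. Reversing every arrow and relabeling the classes, this becomes a factorization
\[
x\xrightarrow{L}\chi^{(R^\op,L^\op)}(x)\xrightarrow{R} 1_P
\]
in $P$. By definition of $\lambda^{(L,R)}$ as the (unique) factoring object for the terminal map $x\to 1_P$ through an $L$-arrow followed by an $R$-arrow, we conclude $\chi^{(R^\op,L^\op)}(x)=\lambda^{(L,R)}(x)$.

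The first equation is entirely symmetric: $\lambda^{(R^\op,L^\op)}(x)$ is the factoring object of the terminal map $x\to 1_{P^\op}=0_P$ in $P^\op$, and reversing arrows yields the $L$-then-$R$ factorization of $0_P\to x$ in $P$, which is exactly $\chi^{(L,R)}(x)$. Alternatively, one can derive this from the second equation by replacing $(L,R)$ with $(R^\op,L^\op)$ and using that the duality is an involution. No obstacle arises beyond bookkeeping; the content is entirely in carefully tracking which endpoint becomes the source and which class labels which arrow after applying $(-)^\op$.
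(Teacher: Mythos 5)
Your proposal is correct and is precisely the definition-unwinding the paper has in mind: the paper offers no written proof, declaring the identities ``an immediate consequence of the definitions,'' and your careful tracking of how $(-)^{\op}$ swaps $0$ with $1$ and $L$ with $R$ supplies exactly that verification. The one point worth making explicit is that the factoring object is unique (which in a poset follows from the lifting axioms, as in the proof of \autoref{thm:fmlae}), so that matching the two factorizations really does force $\chi^{(R^\op,L^\op)}(x)=\lambda^{(L,R)}(x)$.
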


We can also compute (co)characteristic functions by solving certain optimization problems in our poset:

\begin{theorem}\label{thm:fmlae}
Suppose $P$ is a finite lattice and $(L,R)\in \Fac P$. Then
\[
\begin{aligned}
  \chi^{(L,R)}(x) &= \min\{y\in P\mid y~R~x\} = \max \{y\in P\mid 0~L~y\le x\},\\
  \lambda^{(L,R)}(x) &= \max\{y\in P\mid x~L~y\} = \min \{y\in P\mid x\le y~R~1\}.
\end{aligned}
\]
\end{theorem}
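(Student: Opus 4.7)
The plan is to prove the two formulas for $\chi^{(L,R)}(x)$ directly from the lifting property of \autoref{def:lift} and \autoref{def:WFS}, and then obtain the two formulas for $\lambda^{(L,R)}(x)$ for free by invoking the duality \autoref{prop:dualchar} applied to $P^{\op}$ (where minima and maxima, as well as $L$ and $R$, swap roles).

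For the first equality, I would begin by noting that $\chi^{(L,R)}(x)$ itself lies in the set $\{y\in P\mid y~R~x\}$ by the very definition of the factoring object, so it remains to check it is the minimum. Given any $y$ with $y~R~x$, I would consider the lifting square whose left edge is $0~L~\chi^{(L,R)}(x)$, whose right edge is $y~R~x$, whose top edge is the (unique) relation $0\le y$, and whose bottom edge is $\chi^{(L,R)}(x)~R~x$. This square commutes trivially in the poset $P$, so the lifting property of $(L,R)$ produces a relation $\chi^{(L,R)}(x)\le y$, as required.

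For the second equality, $\chi^{(L,R)}(x)$ again lies in $\{y\in P\mid 0~L~y\le x\}$ directly from the defining diagram. To show it is the maximum, suppose $0~L~y$ with $y\le x$. Now I would set up the dual lifting square with left edge $0~L~y$, right edge $\chi^{(L,R)}(x)~R~x$, top edge $0\le \chi^{(L,R)}(x)$, and bottom edge $y\le x$. The resulting lift $y\le \chi^{(L,R)}(x)$ completes the argument. In both cases the only nontrivial ingredient is checking that the outer square commutes, but in a poset category this is automatic.

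Finally, to obtain the two formulas for $\lambda^{(L,R)}$, I would apply \autoref{prop:dualchar} to the lattice $P^{\op}$ with the factorization system $(R^{\op},L^{\op})$. The identity $\lambda^{(L,R)} = \chi^{(R^{\op},L^{\op})}$, combined with the fact that passing to $P^{\op}$ exchanges $\min$ with $\max$ and reverses every relation (so $L$-relations in $P$ become $L^{\op}$-relations read in the reverse direction, and similarly for $R$), transports the two characterizations of $\chi$ on $P^{\op}$ to precisely the two claimed characterizations of $\lambda$ on $P$. The only real obstacle, which is purely notational, is keeping straight which role each element plays after the dualization, but once the dictionary $0\leftrightarrow 1$, $L\leftrightarrow R$, $\le \leftrightarrow \ge$ is fixed, the formulas match up on the nose.
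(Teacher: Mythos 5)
Your proof is correct and is essentially the paper's own argument in mirror image: the paper verifies the two formulas for $\lambda^{(L,R)}$ via the same two lifting squares and then deduces the $\chi^{(L,R)}$ formulas from \autoref{prop:dualchar}, whereas you prove the $\chi$ half directly and dualize to get $\lambda$. Both lifting squares are set up correctly, so no changes are needed.
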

\begin{proof}
We will only check the identities involving $\lambda^{(L,R)}(x)$. The identities for $\chi^{(L,R)}(x)$ then follow by duality.

First note that $x~L~\lambda^{(L,R)}(x)$ by definition, and thus $\lambda^{(L,R)}(x)$ is an element of the set $\{y\in P\mid x~L~y\}$. Now if $x~L~y$, then we have the diagram
\[
\begin{tikzcd}
  x \arrow[blue]{r}{L} \arrow[blue,swap]{d}{L} & \lambda^{(L,R)}(x) \arrow[DarkRed]{d}{R}\\
  y \arrow{r} \arrow[dashed]{ur}& 1
\end{tikzcd}
\]
and the mandatory lift demonstrates that $y\le \lambda^{(L,R)}(x)$. We conclude that $\lambda^{(L,R)}(x)$ is in fact the unique maximum element of $\{y\in P\mid x~L~y\}$.

Next observe that $x~L~\lambda^{(L,R)}(x)~R~1$ by definition, and thus $\lambda^{(L,R)}(x)$ is an element of $\{y\in P\mid x\le y~R~1\}$. Now if $x\le y~R~1$, then we have the diagram
\[
\begin{tikzcd}
  x \arrow{r}\arrow[blue,swap]{d}{L} & y\arrow[DarkRed]{d}{R}\\
  \lambda^{(L,R)}(x) \arrow[DarkRed,swap]{r}{R} \arrow[dashed]{ur} & 1
\end{tikzcd}
\]
and the mandatory lift demonstrates that $\lambda^{(L,R)}(x)\le y$. We conclude that $\lambda^{(L,R)}(x)$ is the unique minimum element of $\{y\in P\mid x\le y~R~1\}$.
\end{proof}

We now consider the properties of the assignments
\[
\begin{aligned}
    \chi\colon \Fac P&\longrightarrow \End P\\
    (L, R)&\longmapsto \chi^{(L,R)}
\end{aligned}
\ \ \ \ \ \ \ \ 
\begin{aligned}
    \lambda\colon \Fac P&\longrightarrow \End P\\
    (L, R)&\longmapsto \lambda^{(L,R)}.
\end{aligned}
\]
We give $\Fac P$ the partial order defined so that $(L,R)\le (L',R')$ if and only if $R\subseteq R'$ (which is equivalent to $L'\subseteq L$). Meanwhile $\End P$ carries the pointwise partial order: $f\le g$ if and only if $f(x)\le g(x)$ in $P$ for all $x\in P$. We recall that an \emph{antitone} map of posets is one that reverses order relations.

\begin{prop}\label{prop:lambdaanti}
Both $\chi$ and $\lambda$ are antitone maps of posets.
\end{prop}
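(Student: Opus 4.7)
The plan is to lean on the optimization characterizations of $\chi^{(L,R)}$ and $\lambda^{(L,R)}$ established in \autoref{thm:fmlae}; once those are available, antitonicity reduces to an elementary observation about how minima and maxima of subsets of $P$ behave under inclusion.

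Concretely, I would fix $(L,R) \le (L',R')$ in $\Fac P$, unpack this as $R \subseteq R'$ and (equivalently) $L' \subseteq L$, and then argue pointwise in $\End P$. For $\chi$, \autoref{thm:fmlae} gives $\chi^{(L,R)}(x) = \min\{y \in P \mid y~R~x\}$. Because every relation in $R$ is also a relation in $R'$, the set on the right is a subset of $\{y \in P \mid y~R'~x\}$, and enlarging a finite subset of a poset can only make its minimum element smaller or equal. Hence $\chi^{(L,R)}(x) \ge \chi^{(L',R')}(x)$ for every $x$, which is exactly $\chi(L,R) \ge \chi(L',R')$ in the pointwise order on $\End P$.

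For $\lambda$, I would run the dual argument using $L' \subseteq L$ and $\lambda^{(L,R)}(x) = \max\{y \in P \mid x~L~y\}$: now the set for $(L,R)$ \emph{contains} the set for $(L',R')$, so its maximum is at least as large, giving $\lambda(L,R) \ge \lambda(L',R')$. Alternatively, one can avoid repeating the argument by appealing to \autoref{prop:dualchar} together with the fact that $(L,R)\mapsto (R^\op, L^\op)$ is an anti-isomorphism $\Fac P \to \Fac P^\op$, which converts the $\lambda$ statement into the $\chi$ statement on the opposite lattice.

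There is no real obstacle: all of the content has been absorbed into \autoref{thm:fmlae}. The only point that demands care is bookkeeping of directions — the order on $\Fac P$ is defined via containment of the \emph{right} class while the order on $\End P$ is pointwise, and $\chi$ involves a minimum while $\lambda$ involves a maximum, so one has to confirm that the two sign flips combine to give antitonicity in both cases rather than monotonicity in one.
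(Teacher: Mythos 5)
Your proposal is correct and follows essentially the same route as the paper: the paper proves antitonicity of $\lambda$ via the $\max$ formula of \autoref{thm:fmlae} exactly as you do, and then deduces the $\chi$ statement by the duality of \autoref{prop:dualchar} rather than rerunning the $\min$ argument. Your direct argument for $\chi$ is also fine, since \autoref{thm:fmlae} guarantees both sets have genuine least elements, so the containment comparison of minima is legitimate.
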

\begin{proof}
By careful consider of the consequences of \autoref{prop:dualchar}, it suffices to check that $\lambda$ is antitone. Suppose that $(L,R)\le (L',R')\in \Fac P$ and fix an arbitrary $x\in P$. Since $L'\subseteq L$, we know that $\{y\in P\mid x~L'~y\}\subseteq \{y\in P\mid x~L~y\}$. By the maximum formulation of $\lambda$ in \autoref{thm:fmlae}, it follows that
\[
  \lambda^{(L',R')}(x)\le \lambda^{(L,R)}(x),
\]
whence $\lambda^{(L',R')}\le \lambda^{(L,R)}$, as desired.
\end{proof}

We now aim to understand the images and fibers of $\chi$ and $\lambda$, which will require the introduction of some additional terminology.

\begin{defn}\label{def:int_clo}
A \textbf{closure operator} on a poset $P$ is a monotone function $c\colon P\to P$ that is \textbf{extensive} ($x\le cx$) and \textbf{idempotent} ($ccx=cx$). We write $\clop P$ for the poset of closure operators under the pointwise partial order.

An \textbf{interior operator} on a poset $P$ is a monotone function $i\colon P\to P$ that is \textbf{contractive} ($ix\le x$) and idempotent. We write $\End^\circ P$ for the poset of interior operators under the pointwise partial order.
\end{defn}

\begin{defn}\label{defn:reflective}
    For a category $\mathbf{C}$ with terminal object $1$, we say an orthogonal factorization system $(L, R)$ on $\mathbf C$ is a \textbf{reflective} factorization system if $R/1$ is a reflective subcategory of $\mathbf{C}$; that is, the inclusion functor $R/1\to \mathbf{C}$ admits a left adjoint.

    Dually, $(L,R)$ is a \textbf{coreflective} factorization system if $0\backslash L$ is a coreflective subcategory of $\mathbf C$; that is, the inclusion functor $0\backslash L\to \mathbf C$ admits a right adjoint.
\end{defn}

\begin{remark}\label{rmk:joinmeet}
When $\mathbf C$ is a poset category, there are simpler formulations of (co)reflectivity. In particular, a consequence of the adjoint functor theorem is that a functor from a lattice to a poset is a left (respectively right) adjoint if and only if it preserves joins (respectively meets). In particular, a factorization system $(L,R)$ on a lattice is reflective if and only if the objects of $R/1$ (\emph{i.e.}, $\{x\in P\mid x~R~1\}$) are closed under meet in $P$. Similarly, $(L,R)$ is coreflective if and only if the objects of $0\backslash L$ (\emph{i.e.}, $\{x\in P\mid 0~L~x\}$) are closed under join in $P$.

We will see in \autoref{thm:submonoid} that $(L,R)\mapsto R/1$ induces a bijection between reflective factorization systems and submonoids of $(P,\wedge)$, while $(L,R)\mapsto 0\backslash L$ induces a bijection between coreflective factorization system and submonoids of $(P,\vee)$. We will use this fact in a crucial step of our proof of \autoref{thm:fibers}, but of course its proof is independent.
\end{remark}

This allows us to state our main theorem. For our purposes, a (bounded, closed) \textbf{interval} in a poset $P$ will be any induced subposet of the form $[a,b] = \{x\in P\mid a\le x\le b\}$.

\begin{theorem}\label{thm:fibers}
Suppose $P$ is a finite lattice. Then $\chi\colon \Fac P\to \End P$ has image $\End^\circ P$. The fibers of $\chi$ are intervals in $\Fac P$ and restricting $\chi$ to maximal elements of fibers produces a bijection between coreflective factorization systems and interior operators.

Dually, $\lambda\colon \Fac P\to \End P$ has image $\clop P$. The fibers of $\lambda$ are intervals in $\Fac P$ and restricting $\lambda$ to minimal elements of fibers produces a bijection between reflective factorization systems and closure operators.
\end{theorem}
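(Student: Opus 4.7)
By \autoref{prop:dualchar} the statements for $\chi$ and $\lambda$ are interchanged under $(L,R) \mapsto (R^{\op}, L^{\op})$, so it suffices to treat the $\lambda$ half. First I'll verify that $\lambda^{(L,R)}$ is always a closure operator: extensivity holds because $x~L~\lambda^{(L,R)}(x)$ and $L$ refines ${\le}$; monotonicity follows from the min-formulation in \autoref{thm:fmlae}; and idempotence is immediate from the fact that $\lambda^{(L,R)}(x)$ is itself a member of $\{y : \lambda^{(L,R)}(x) \le y,\ y~R~1\}$. This places the image of $\lambda$ inside $\clop P$.

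To analyze the fiber $\lambda^{-1}(c)$ for a fixed $c \in \clop P$, write $M := \{x \in P : c(x) = x\}$ for the set of fixed points of $c$; the min-formulation in \autoref{thm:fmlae} makes clear that $\lambda^{(L,R)}$ depends only on $R/1$, and $\lambda^{(L,R)} = c$ if and only if $R/1 = M$. Thus the fiber is $\{(L,R) \in \Fac P : R/1 = M\}$, and is convex: if $R_1 \subseteq R \subseteq R_2$ with $R_1/1 = R_2/1 = M$, then $R/1 = M$. The main technical step is to produce a unique minimum and maximum in the fiber. For this I introduce the auxiliary relation
\[
\widehat R := \{(x,y) : x \le y \text{ and } w \wedge x \in M \text{ for every } w \in M \text{ with } w \le y\}
\]
and show it is itself a transfer system with $\widehat R/1 = M$, bounding the fiber from above.

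Verification that $\widehat R$ is a transfer system is a direct case analysis using $M$'s closure under meet and basic absorption identities for $\wedge$; setting $w = 1 \in M$ then gives $\widehat R/1 = M$. Each generator $(m,1)$ with $m \in M$ lies in $\widehat R$ by meet-closure of $M$, so the disklike transfer system $\langle M \to 1\rangle$ of \autoref{defn:disklike} is contained in $\widehat R$; in particular $\langle M \to 1\rangle/1 = M$, and this is the fiber's unique minimum since any $R$ in the fiber must contain the generators $\{(m,1) : m \in M\}$. Moreover, any such $R$ itself embeds into $\widehat R$: given $(x,y) \in R$ and $w \in M$ with $w \le y$, the transfer axiom produces $w \wedge x~R~w$, which composed with $w~R~1$ yields $w \wedge x \in R/1 = M$. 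Consequently $R_1 \vee R_2 \subseteq \widehat R$ for any $R_1, R_2$ in the fiber, so the fiber is closed under binary join in $\Tr P$; this yields a unique maximum, completing the interval description. Finally, by \autoref{thm:submonoid}, assigning to each $c \in \clop P$ the unique reflective factorization system with $R/1 = M$ realizes a bijection between $\clop P$ and the set of reflective factorization systems, and this bijection is precisely the map sending $c$ to the minimum $\langle M \to 1\rangle$ of its $\lambda$-fiber, concluding the proof.
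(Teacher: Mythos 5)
Your proof is correct, and the half of it concerning the fiber structure takes a genuinely different route from the paper. The paper never writes down the endpoints of a fiber explicitly: it shows the fibers are closed under binary meet and join (\autoref{lemma:fiberclosemeet} and \autoref{lemma:fiberclosejoin}, the latter a somewhat delicate contradiction argument leaning on Rubin's Theorem~A.2 describing joins of transfer systems), notes convexity via antitonicity, and then identifies the minima as reflective by a separate abstract argument (\autoref{lemma:fiberreflect}). You instead observe that the fiber over $c$ is exactly $\{(L,R): R/1 = M\}$ with $M$ the fixed-point set of $c$, and exhibit its minimum $\langle M\to 1\rangle$ and maximum $\widehat R$ by explicit formulas. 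This is more constructive and buys something the paper doesn't state: a closed-form description of the top of each fiber. I checked the part you only sketch --- that $\widehat R$ is a transfer system --- and it does go through routinely: reflexivity and pullback-stability are immediate, and transitivity uses $x\le y$ to rewrite $(w\wedge y)\wedge x = w\wedge x$ after applying the defining condition twice; the containment $R\subseteq\widehat R$ for any $R$ in the fiber follows from pullback-stability plus transitivity exactly as you say. Two small points worth tightening: your final paragraph implicitly invokes \autoref{thm:clsubmon} (to pass from $c$ to the submonoid $M$) alongside \autoref{thm:submonoid}, and it uses \autoref{thm:refdisk} to know that the disklike system $\langle M\to 1\rangle$ is the reflective one; both citations should be made explicit, as the paper does via its commuting triangle through $\SubMon(P,\wedge)$. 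The duality reduction and the final bijection step otherwise match the paper's argument.
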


We prove \autoref{thm:fibers} in several steps, beginning with a formal lemma.

\begin{lemma}\label{lemma:refcoref}
A factorization system $(L,R)$ on $\mathbf C$ is reflective if and only if $(R^\op,L^\op)$ is coreflective on $\mathbf C^\op$. \hfill\qedsymbol
\end{lemma}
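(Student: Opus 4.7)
The plan is to unwind definitions and invoke two basic principles of categorical duality: passing $\mathbf{C} \leftrightarrow \mathbf{C}^\op$ exchanges the initial and terminal objects, and an adjunction $F \dashv G$ in $\mathbf{C}$ dualizes to an adjunction $G^\op \dashv F^\op$ in $\mathbf{C}^\op$. Because the statement is a biconditional, I would aim to carry out a single chain of equivalences rather than proving the two directions separately.

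First I would quickly verify that $(R^\op, L^\op)$ is in fact a factorization system on $\mathbf{C}^\op$: the lifting condition $L \perp R$ and the existence of factorizations $f = pi$ with $i \in L$, $p \in R$ become, after reversing every arrow, $R^\op \perp L^\op$ together with factorizations $f^\op = i^\op \circ p^\op$ in which $p^\op \in R^\op$ now sits on the left and $i^\op \in L^\op$ on the right. This step is routine and uses only the self-duality built into \autoref{def:lift} and \autoref{def:WFS}.

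The heart of the argument is to match the relevant sub(co)reflective pieces across the duality. By \autoref{defn:reflective}, reflectivity of $(L,R)$ on $\mathbf{C}$ says that the inclusion $\iota\colon R/1 \hookrightarrow \mathbf{C}$ admits a left adjoint. Under passage to $\mathbf{C}^\op$, the terminal $1$ becomes the initial object $0_{\mathbf{C}^\op}$, and the condition $x~R~1$ becomes $0_{\mathbf{C}^\op}~R^\op~x$, so the full subcategory $(R/1)^\op$ of $\mathbf{C}^\op$ is literally $0_{\mathbf{C}^\op}\backslash R^\op$. Passing the adjunction to opposites converts ``left adjoint to $\iota$ in $\mathbf{C}$'' into ``right adjoint to $\iota^\op$ in $\mathbf{C}^\op$,'' which is exactly the statement that $(R^\op, L^\op)$ is coreflective on $\mathbf{C}^\op$. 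Every implication reverses, delivering the desired biconditional.

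I expect no real obstacle; the content is pure bookkeeping, and the lemma's role in the paper is to let subsequent dual statements --- notably the two halves of \autoref{thm:fibers} --- be obtained automatically from their primal counterparts.
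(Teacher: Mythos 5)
Your argument is correct and is exactly the formal duality bookkeeping the paper has in mind; the paper omits the proof entirely (the lemma is stated with a \qedsymbol as immediate), and your unwinding --- $(R/1)^\op = 0_{\mathbf C^\op}\backslash R^\op$ plus the fact that a left adjoint to $\iota$ dualizes to a right adjoint to $\iota^\op$ --- supplies precisely the intended justification.
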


As a consequence of \autoref{lemma:refcoref} and \autoref{prop:dualchar}, we only need to prove the $\lambda$ portion of \autoref{thm:fibers}; the portion about $\chi$ then follows by duality.

The following two lemmas will combine to show that the fibers of $\lambda$ are intervals.

\begin{lemma}\label{lemma:fiberclosemeet}
    Let $P$ be a lattice, and let $(L,R), (L', R')$ be factorization systems on $P$ such that $\lambda^{(L,R)}=\lambda^{(L',R')}$.  Then $\lambda^{(L, R) \wedge (L',R')}=\lambda^{(L,R)}$.
\end{lemma}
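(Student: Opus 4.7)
The plan is to pin down the meet $(L,R)\wedge(L',R')$ via the transfer-system correspondence of \autoref{thm:fooqw}, and then apply the ``min'' characterization of $\lambda$ from \autoref{thm:fmlae}. The upshot is that the meet has a very clean right class, and this is all one needs: the only nontrivial direction of the desired equality follows from an easy intersection argument.

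First I would observe that under the isomorphism $\Fac P \cong \Tr P$, the meet of two factorization systems corresponds to the meet of the associated transfer systems, and a direct check confirms that the meet in $\Tr P$ is simply the intersection. (If $x\le z$ and $y\mathbin{(R\cap R')}z$, then pullback stability holds in both $R$ and $R'$, so $x\wedge y\mathbin{(R\cap R')}y$.) Writing $(L'',R'') := (L,R)\wedge(L',R')$, this gives $R'' = R\cap R'$. We do not need an explicit description of $L''$, only the implicit one $L'' = {}^\perp R''$.

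Next I would observe that one inequality is free: since $(L'',R'')\le (L,R)$ and $\lambda$ is antitone by \autoref{prop:lambdaanti}, we have $\lambda^{(L,R)} \le \lambda^{(L'',R'')}$ pointwise. The task is therefore to prove the reverse inequality $\lambda^{(L'',R'')}(x) \le \lambda^{(L,R)}(x)$ for each $x\in P$. Here I use the second formula of \autoref{thm:fmlae}:
\[
 \lambda^{(L,R)}(x) = \min\{y\in P \mid x\le y~R~1\}.
\]
Set $z := \lambda^{(L,R)}(x) = \lambda^{(L',R')}(x)$. Then $x\le z$, and applying the min-formula to both $(L,R)$ and $(L',R')$ shows $z~R~1$ and $z~R'~1$, hence $z\mathbin{(R\cap R')}1$, i.e., $z~R''~1$. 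Thus $z$ lies in $\{y\in P\mid x\le y~R''~1\}$, so by the min-formula applied to $(L'',R'')$ we conclude $\lambda^{(L'',R'')}(x)\le z = \lambda^{(L,R)}(x)$.

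The only conceptual step is the first one: identifying the meet in $\Fac P$ with intersection on the right-class side via the transfer system description. Once that is in hand, the rest is a two-line squeeze using antitonicity and the minimum formulation of $\lambda$, with no real obstacle to overcome. Note that a ``dual'' style argument using only the maximum formulation $\lambda^{(L,R)}(x)=\max\{y \mid x~L~y\}$ is \emph{less} convenient because $L''$ can be strictly larger than $L\cup L'$, so the intersection trick does not transport cleanly to the left class — this is precisely why the $R$-side (min) formulation is the right tool.
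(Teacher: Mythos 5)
Your proof is correct and follows essentially the same route as the paper's: identify the meet as $({}^\perp(R\cap R'),\,R\cap R')$, get one inequality from antitonicity (\autoref{prop:lambdaanti}), and get the other by noting that $\lambda^{(L,R)}(x)=\lambda^{(L',R')}(x)$ lies over $x$ and maps to $1$ in both $R$ and $R'$, hence in $R\cap R'$, so the min-formula of \autoref{thm:fmlae} applies. Your explicit check that $R\cap R'$ is pullback-stable and your remark on why the min (rather than max) formulation is the right tool are useful elaborations, but the argument is the same.
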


\begin{proof}
     We first note that $(L, R)\wedge (L', R')=({}^\bot (R\cap R'), R\cap R')$. Thus, we have by \autoref{prop:lambdaanti} that $\lambda^{(L, R) \wedge (L',R')} \geq \lambda^{(L,R)}$. Let $x\in P$ and $y=\lambda^{(L,R)}(x)=\lambda^{(L',R')}(x)$.  Since $y~R~1$ and $y~R'~1$, we have that $y~R\wedge R'~1$.  Thus $\lambda^{(L, R) \wedge (L',R')} \leq \lambda^{(L,R)}$ and $\lambda^{(L, R) \wedge (L',R')}= \lambda^{(L,R)}$.
\end{proof}

\begin{lemma}\label{lemma:fiberclosejoin}
    Let $P$ be a lattice, and let $(L,R),(L',R')$ be factorization systems on $P$ such that $\lambda^{(L,R)}=\lambda^{(L',R')}$.  Then $\lambda^{(L,R) \vee (L',R')}=\lambda^{(L,R)}$.
\end{lemma}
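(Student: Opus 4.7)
The plan is to prove both inequalities separately. Setting $(L'',R'') := (L,R) \vee (L',R')$ and $y_0(x) := \lambda^{(L,R)}(x) = \lambda^{(L',R')}(x)$, the inequality $\lambda^{(L'',R'')} \le \lambda^{(L,R)}$ is immediate from the antitone property of $\lambda$ (\autoref{prop:lambdaanti}).

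For the reverse inequality, I would use the max formulation in \autoref{thm:fmlae}: it suffices to verify that $x \, L'' \, y_0(x)$ for every $x \in P$, since this places $y_0(x)$ in the set whose maximum is $\lambda^{(L'',R'')}(x)$. Unwinding the poset form of the lifting criterion for $L'' = {}^\bot R''$, this amounts to showing that every $(c,d) \in R''$ with $x \le c$ and $y_0(x) \le d$ satisfies $y_0(x) \le c$.

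My main strategy is to introduce the auxiliary subset
\[
  \tilde R := \bigl\{(a,b) \in {\le} \ : \ \forall z \in P,\ z \le a \text{ and } y_0(z) \le b \implies y_0(z) \le a\bigr\}
\]
of the order relation, then establish (i) that $\tilde R$ is a transfer system on $P$, and (ii) that $R \cup R' \subseteq \tilde R$. Granting both, the universal property of the join in $\Tr(P)$ --- which matches the join in $\Fac(P)$ under \autoref{thm:fooqw} --- yields $R'' \subseteq \tilde R$, and the desired lifting condition above then follows by specializing the defining property of $\tilde R$ to the given $x$.

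Claim (i) is a routine verification: reflexivity and transitivity of $\tilde R$ are unpackings of the definition, and the pullback axiom reduces to a one-line chase, since for $(b,c) \in \tilde R$, $a \le c$, and $z \le a \wedge b$ with $y_0(z) \le a$, we have $z \le b$ and $y_0(z) \le c$, hence $y_0(z) \le b$ by the hypothesis, giving $y_0(z) \le a \wedge b$. For Claim (ii), the inclusion $R \subseteq \tilde R$ follows from the lifting property of $(L,R)$: since $z \, L \, y_0(z)$ by the very definition of $\lambda^{(L,R)}(z)$, any $(a,b) \in R$ with $z \le a$ and $y_0(z) \le b$ admits a lift, which in poset language says $y_0(z) \le a$. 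The inclusion $R' \subseteq \tilde R$ is identical, using $z \, L' \, y_0(z)$. No individual step is difficult; the only conceptual point worth isolating is the identification of the join in $\Fac(P)$ with the smallest transfer system containing the union of right classes, after which the argument via $\tilde R$ is essentially mechanical.
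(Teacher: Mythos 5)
Your proof is correct, and it takes a genuinely different route from the paper's. The paper argues by contradiction: assuming $\lambda^{(L,R)\vee(L',R')}(x) < \lambda^{(L,R)}(x)$ for some $x$, it invokes Theorem A.2 of \cite{Rubin} --- the fact that the only relations in $R\vee R'$ beyond $R\cup R'$ are those forced by transitivity --- to produce an intermediate object $w$ with $y'~R~w~R'~1$, and then derives a contradiction by taking $w$ minimal. Your argument is direct and avoids that external input entirely: your $\tilde R$ is exactly $\{(z,\lambda^{(L,R)}(z)) \mid z\in P\}^{\perp}$, and once you check it is a transfer system containing $R\cup R'$, the universal property of the join forces $R\vee R'\subseteq \tilde R$, which says precisely that each $(x,\lambda^{(L,R)}(x))$ lies in ${}^{\perp}(R\vee R')=L''$; the max formula of \autoref{thm:fmlae} then yields the nontrivial inequality, the other direction being antitonicity as you say. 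All the verifications you sketch (reflexivity, transitivity, and pullback-stability of $\tilde R$, and $R,R'\subseteq\tilde R$ via the lift for the square with left leg $z~L~\lambda^{(L,R)}(z)$, using the hypothesis $\lambda^{(L,R)}=\lambda^{(L',R')}$ for the $R'$ case) check out. What your approach buys is self-containedness --- no appeal to the combinatorial description of joins of transfer systems --- and it isolates the reusable fact that any class of the form $M^{\perp}$ is pullback-stable and transitive; what the paper's approach buys is a concrete picture of which new relations could obstruct the equality. One small point worth making explicit in a final write-up is the step you flag yourself: the join in $\Tr(P)$ is the least transfer system containing the union because transfer systems are closed under intersection and the full order $\le$ is itself a transfer system, and this is what licenses $R''\subseteq\tilde R$.
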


\begin{proof}
    Suppose by way of contradiction that $\lambda^{(L, R) \vee (L',R')}\neq \lambda^{(L,R)}$.  Since $(L, R) <(L, R)\vee (L', R')$, by \autoref{prop:lambdaanti} it follows that $\lambda^{(L, R) \vee (L',R')}< \lambda^{(L,R)}$.  Consequently,  there is a $x\in P$ where $\lambda^{(L, R) \vee (L',R')}(x)<\lambda^{(L,R)}(x)$. Let $y':=\lambda^{(L, R) \vee (L',R')}(x)$ and $y:=\lambda^{(L,R)}(x)$.
    
    It must be the case that $y'~\not{ R}~1$, or else $\lambda^{(L,R)}(x)$ would not be $y$, and since $\lambda^{(L',R')}(x)=y$, we also have that $y'~\not{ R'}~1$. Thus the relation $y'~R\vee R'~1$ is a new relation not a part of $R, R'$ and by Theorem A.2 \cite{Rubin}, the only new relations of $R\vee R'$ are due to transitivity. Without loss of generality, suppose that  there were a $w\in P$ where $y'~R~w~R'~1$.  
    
    Note that if there were a $w'\in P$ such that $y'\leq w'~R'~1$, then by closure under pullback we would have that $w\wedge w'~R'~w$ and thus $w\wedge w'~R'~1$ (see \autoref{fig:fiberclosejoin}).  Then, since $y'\leq w\wedge w'$, we have by pullback again that $y'~R~w\wedge w'$ (also depicted in \autoref{fig:fiberclosejoin}).

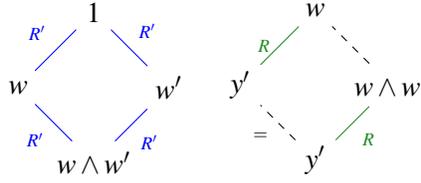
\begin{figure}
    \[\begin{tikzpicture}
    \node (A) at (1,2){$1$};
    \node (B) at (0,1){$w$};
    \node (C) at (2,1){$w'$};
    
    \node (E) at (1, 0){$w\wedge w'$};

    \draw[blue] (A) --node[above left]{\tiny$R'$} (B);
    \draw[blue] (A) --node[above right]{\tiny$R'$} (C);

    \draw[blue] (E) --node[below left]{\tiny $R'$} (B);
    \draw[blue] (E) --node[below right]{\tiny$R'$} (C);
\end{tikzpicture}
\ \ \ \ 
\begin{tikzpicture}
    \node (B) at (1,1){$w$};
    
    \node (D) at (0,0){$y'$};
    \node (E) at (2, 0){$w\wedge w'$};
    \node (F) at (1, -1){$y'$};

    \draw[ForestGreen] (D) --node[ left]{\tiny $R$} (B);
    \draw[dashed] (E) -- (B);
    \draw[dashed] (F) --node[below left]{\tiny $=$} (D);
    \draw[ForestGreen] (F) --node[below right]{\tiny $R$} (E);
\end{tikzpicture}
\]
\caption{Relations invoked in the proof of \autoref{lemma:fiberclosejoin}.}\label{fig:fiberclosejoin}
\end{figure}

    Thus, without loss of generality, we may let $w$ be the minimal such choice and $w=\lambda^{(L',R')}(y')$.  But then we have that $w=\lambda^{(L,R)}(y')$, and so $w~R~1$, and by transitivity $y'~R~1$.  This contradicts the claim that $\lambda^{(L, R) \vee (L',R')}(x)<\lambda^{(L,R)}(x)$.
\end{proof}

In the following lemma, we come to terms with the minima of the fibers of $\lambda$.

\begin{lemma}\label{lemma:fiberreflect}
Suppose $P$ is a finite lattice and $(L,R)$ is the lower endpoint (\emph{i.e.}, minimum) of a fiber of $\lambda\colon \Fac P\to \End P$. Then $(L,R)$ is reflective.
\end{lemma}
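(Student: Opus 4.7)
The strategy is to leverage \autoref{thm:fmlae} --- which shows that $\lambda^{(L,R)}$ depends only on the top-slice $R/1 := \{y \in P \mid y~R~1\}$ --- to pin down $R$ under the minimality hypothesis, then to exhibit the reflector explicitly.

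First, let $R^\circ := \langle \{(y,1) \mid y \in R/1\}\rangle$ be the smallest transfer system on $P$ containing these generators. Since $R$ itself is a transfer system containing the generators, $R^\circ \subseteq R$, and moreover $R^\circ/1 = R/1$: the inclusion $R^\circ/1 \supseteq R/1$ holds by construction, and $R^\circ/1 \subseteq R/1$ follows from $R^\circ \subseteq R$. By the formula for $\lambda$ in \autoref{thm:fmlae}, $\lambda^{(L^\circ,R^\circ)} = \lambda^{(L,R)}$, so the two factorization systems share a $\lambda$-fiber. The minimality of $(L,R)$ in this fiber then forces $R \subseteq R^\circ$, giving $R = R^\circ$.

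Next, I verify reflectivity in the sense of \autoref{defn:reflective}. The pullback axiom applied to $b~R~1$ and $a \le 1$ yields $a\wedge b~R~b$ for any $a,b \in R/1$; transitivity with $b~R~1$ then gives $a\wedge b~R~1$. Hence $R/1$ is closed under meet in $P$. The monotone endomorphism $\lambda^{(L,R)}$ takes values in $R/1$ (by the very definition of the minimum witnessing formula), is extensive ($x \le \lambda^{(L,R)}(x)$), and is idempotent because its values already lie in $R/1$; thus it serves as the left adjoint to the inclusion $R/1 \hookrightarrow P$, confirming reflectivity.

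The main obstacle is the first step: pinning down $R$ as the smallest transfer system with the given top-slice, which is where the minimality hypothesis is essential. The reflectivity verification in the second step is formal and, on its own, would hold for any factorization system. The principal content of the lemma is therefore the identification $R = \langle R/1\rangle$, which aligns minima of $\lambda$-fibers with the disklike transfer systems of \autoref{defn:disklike} and sets up the bijection with closure operators claimed in \autoref{thm:fibers}.
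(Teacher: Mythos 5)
Your identification of $R=\langle R/1\rangle$ is correct and is exactly the substance of the paper's two-sentence proof, written out in full: by \autoref{thm:fmlae} the cocharacteristic function depends only on $R/1$, so $({}^\perp\langle R/1\rangle,\langle R/1\rangle)$ lies in the same fiber as $(L,R)$, it sits below $(L,R)$, and minimality forces equality. That part is airtight and is where the minimality hypothesis does its work.

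The step to examine is your final verification of reflectivity. As you yourself note, everything you check there --- that $R/1$ contains $1$, is closed under binary meets, and that $\lambda^{(L,R)}$ is left adjoint to the inclusion $R/1\hookrightarrow P$ --- holds for \emph{every} factorization system on a finite lattice, not just for fiber minima. If that were the operative meaning of ``reflective,'' the lemma would be vacuous and \autoref{thm:refdisk} (reflective $\Leftrightarrow$ disklike) would fail, since not every transfer system satisfies $R=\langle R/1\rangle$. This tension traces back to a looseness in \autoref{defn:reflective} and \autoref{rmk:joinmeet} as literally stated: the slice $R/1$ is \emph{always} a reflective subcategory of a finite lattice, with reflector $\lambda^{(L,R)}$; what singles out a reflective \emph{factorization system} is that $(L,R)$ is recovered from that subcategory, i.e.\ $R=\langle R/1\rangle$ (equivalently, $L$ satisfies 3-for-2). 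So the conclusion should be routed through the identity you actually proved: $R=\langle R/1\rangle$ says $R$ is disklike in the sense of \autoref{defn:disklike}, which is the characterization of reflectivity that \autoref{thm:refdisk} and the proof of \autoref{thm:fibers} rely on. With that last step rerouted, your argument is complete and coincides with the paper's intended proof.
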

\begin{proof}
The cocharacteristic function on $(L,R)$ is completely determined by $R/1$. Now observe that the minimal factorization system whose right morphisms contain a given $R/1$ is necessarily reflective.
\end{proof}

We now start our analysis of the image of $\lambda$.

\begin{lemma}
Let $P$ be a finite lattice and $(L,R)$ a factorization system on $P$. Then $\lambda^{(L,R)}$ is a closure operator.
\end{lemma}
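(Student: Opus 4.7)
The plan is to verify the three defining properties of a closure operator (monotonicity, extensivity, idempotence) directly from the optimization characterizations of $\lambda^{(L,R)}$ established in \autoref{thm:fmlae}.

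Monotonicity is immediate and was already observed in the paragraph following the definition of $\lambda^{(L,R)}$: if $x \leq x'$, then any factorization $x' \to \lambda^{(L,R)}(x') \to 1$ precomposes to give $x \leq x' \to \lambda^{(L,R)}(x')~R~1$, so by the minimum formulation $\lambda^{(L,R)}(x) \leq \lambda^{(L,R)}(x')$. Extensivity $x \leq \lambda^{(L,R)}(x)$ is built into the definition, since $\lambda^{(L,R)}(x)$ is the factoring object of the terminal map $x \to 1$, and hence we have the relation $x~L~\lambda^{(L,R)}(x)$ which in particular implies $x \leq \lambda^{(L,R)}(x)$ in $P$.

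The only content lies in idempotence. Write $y := \lambda^{(L,R)}(x)$; I want to show $\lambda^{(L,R)}(y) = y$. Extensivity gives $y \leq \lambda^{(L,R)}(y)$ for free. For the reverse inequality I will invoke the \emph{minimum} formulation of \autoref{thm:fmlae}, namely $\lambda^{(L,R)}(y) = \min\{z \in P \mid y \leq z~R~1\}$. By the very construction of $y = \lambda^{(L,R)}(x)$, we know that $y~R~1$, so $y$ itself sits in this set with $y \leq y~R~1$. Hence $\lambda^{(L,R)}(y) \leq y$, yielding the equality $\lambda^{(L,R)}(y) = y$.

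I do not anticipate any real obstacle: once the two optimization formulas of \autoref{thm:fmlae} are in hand, each of the three conditions collapses to a single line. The only subtlety is remembering to use the minimum formulation (rather than the maximum one) for idempotence, because the maximum formulation requires knowing $y~L~y$ holds in a trivial sense, whereas the minimum formulation lets us use the already-established fact $y~R~1$ directly.
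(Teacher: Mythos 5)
Your proof is correct and follows essentially the same route as the paper: both verify extensivity and idempotence directly from the optimization formulas of \autoref{thm:fmlae}. The only (harmless) difference is in the idempotence step, where the paper uses the maximum formulation together with transitivity of $L$, while you use the minimum formulation together with the fact that $\lambda^{(L,R)}(x)~R~1$; both are one-line arguments and equally valid.
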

\begin{proof}
We already know that $\lambda^{(L,R)}\colon P\to P$ is monotone, so it suffices to check that it is extensive and idempotent. By \autoref{thm:fmlae}, we know that $\lambda^{(L,R)}(x) = \max\{y\in P\mid x~L~y\} \ge x$, so $\lambda^{(L,R)}$ is extensive. Using the same formula, transitivity of $L$ implies that $\lambda^{(L,R)}$ is idempotent.
\end{proof}

Finally, we are in a position to complete the proof of the main theorem of this section.

\begin{proof}[Proof of \autoref{thm:fibers}]
We have previously noted that the theorem on $\chi$ follows by duality from the theorem for $\lambda$.

The preceding lemmas demonstrate that $\lambda\colon \Fac P\to \End P$ factors through $\clop P$ and its fibers are intervals whose lower endpoints are reflective factorization systems. Thus it suffices to check that the restriction of $\lambda$ to reflective factorization systems is a bijection onto $\clop P$.

Consider the diagram
\[
\begin{tikzcd}
  \Fac^{\refl} P \arrow{r}{\lambda} \arrow{dr} & \clop P\arrow{d}\\
  & \SubMon(P,\wedge)
\end{tikzcd}
\]
where the vertical arrow takes $c\in \clop P$ to the fixed points of $c$, $P^c := \{x\in P\mid cx=x\}$, and the diagonal arrow takes $(L,R)$ to (the set of objects of) $R/1$. The reader may check that this diagram commutes. We will see in \autoref{thm:clsubmon} that the vertical arrow is a bijection, and \autoref{thm:submonoid} tells us the diagonal arrow is a bijection. The results follows.
\end{proof}

\section{The unbearable ubiquity of (co)reflective factorization systems}\label{sec:crypto}
In this section, we establish a web of bijections --- summarized in \autoref{fig:web} --- between (co)reflective factorization systems and other structures.

\begin{figure}[h]
\begin{tikzpicture}[scale=2.75]
        \node (A1) at (-0.75,-1){$\coreft(P)$};
        \node (B1) at (0.75,-1){$\End^\circ(P)$};
        \node (C1) at (0,-0.25){$\SubMon(P, \vee)$};
        \node (D1) at (-2,-1.5){$\text{Fibrant}(P)$};
        \node (E1) at (-2,-1){$\sat(P)$};
        \node (F1) at (-2,-0.5){$\{\max\{\chi^{-1}(c)\} \}$};
        \node (G1) at (2,-1){$\text{coMonad}(P)$};

        \node (A2) at (-0.75,1){$\reft(P)$};
        \node (B2) at (0.75,1){$\overline{\End}(P)$};
        \node (C2) at (0,0.25){$\SubMon(P, \wedge)$};
        \node (D2) at (-2,1.5){$\text{coFibrant}(P)$};
        \node (E2) at (-2,1){$\cosat(P)$};
        \node (F2) at (-2,0.5){$\{\min\{\lambda^{-1}(i)\} \}$};
        \node (G2) at (2,1){$\text{Monad}(P)$};

        \draw[<->] (A1) --node[above , sloped, ]{\tiny  \ref{thm:refdisk}} (E1);
        \draw[<->] (A1) --node[above , sloped, ]{\tiny   \ref{prop:model}} (D1);
        \draw[<->] (A1) --node[above , sloped, ]{\tiny  \ref{lemma:fiberreflect}} (F1);
        \draw[<->] (A1) --node[above , sloped, ]{\tiny  \ref{thm:fibers}} (B1);
        \draw[<->] (A1) --node[below , sloped, ]{\tiny  \ref{thm:submonoid}} (C1);
        \draw[<->] (C1) --node[below , sloped, ]{\tiny  \ref{thm:clsubmon}} (B1);
        \draw[<->] (B1) --node[above , sloped, ]{\tiny  \ref{prop:monad}} (G1);

        \draw[<->] (A2) --node[below , sloped, ]{\tiny  \ref{thm:refdisk}} (E2);
        \draw[<->] (A2) --node[below , sloped, ]{\tiny  \ref{prop:model}} (D2);
        \draw[<->] (A2) --node[below , sloped, ]{\tiny  \ref{lemma:fiberreflect}} (F2);
        \draw[<->] (A2) --node[below , sloped, ]{\tiny  \ref{thm:fibers}} (B2);
        \draw[<->] (A2) --node[above , sloped, ]{\tiny  \ref{thm:submonoid}} (C2);
        \draw[<->] (C2) --node[above , sloped, ]{\tiny  \ref{thm:clsubmon}} (B2);
        \draw[<->] (B2) --node[below , sloped, ]{\tiny  \ref{prop:monad}} (G2);

        \draw[dashed] (-2.5,0)--(2.5,0);

    \end{tikzpicture}

\caption{Above the dashed line: Bijections between reflective factorization systems and other mathematical structures. Below the dashed line: The same, but for coreflective factorization systems. Each arrow is labeled by the number of the theorem (or proposition, \emph{etc}.) establishing the bijection.  The dashed line represents the formal duality between factorization systems $(L, R)$ on $P$ and $(R^\op, L^\op)$ on $P^\op$.}\label{fig:web}
\end{figure}
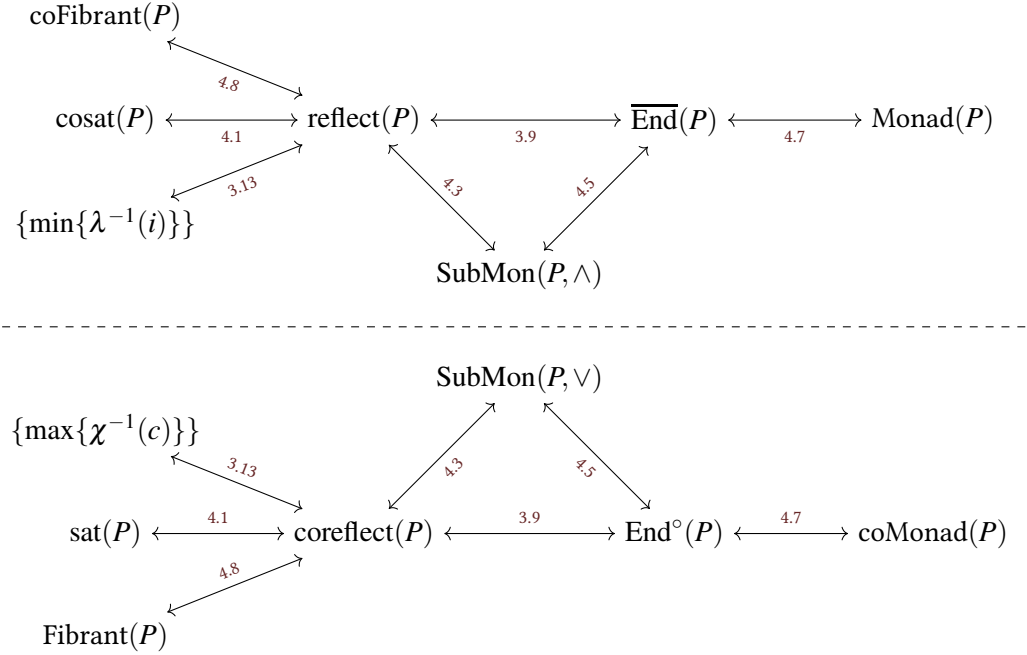

Given a factorization system $(L,R)$ on a lattice $P$, we abuse notation and write $R/1$ and $0\backslash L$ for the objects of the these categories, \emph{i.e.}, $R/1 := \{x\in P\mid x~R~1\}$ and $0\backslash L := \{x\in P\mid 0~L~x\}$.

\begin{theorem}\label{thm:refdisk}
Let $P$ be a finite lattice. Under the bijection $\Fac P\cong \Tr P$ of \autoref{thm:fooqw}, reflective factorization systems correspond to disklike transfer systems, and coreflective factorization systems correspond to saturated transfer systems.
\end{theorem}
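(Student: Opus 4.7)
The plan is to invoke \autoref{thm:fibers} and \autoref{lemma:fiberreflect} to reduce each equivalence to identifying the extremal member of a fiber, and then matching that element with the relevant combinatorial condition.

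For reflective $\iff$ disklike: reflective factorization systems coincide with minima of $\lambda$-fibers, and two factorization systems share a $\lambda$-fiber precisely when they share the same $R/1$ (which is automatically a $\wedge$-submonoid of $P$, since for $a,b\in R/1$ pullback stability and transitivity combine to give $a\wedge b~R~1$). I would argue that the minimum member of the $\lambda$-fiber corresponding to $S = R/1$ is exactly the disklike system $\langle S\rangle$. Any transfer system with $R/1 = S$ must contain the generating relations $\{x\to 1 : x\in S\}$, hence contains $\langle S\rangle$. Conversely, one can exhibit $\langle S\rangle$ as the pullback closure $\{(b\wedge x, b) : x\in S,\ b\in P\}$ of the generators, and verify directly---using the $\wedge$-closure of $S$---that this is a transfer system whose relations to $1$ are exactly $S$. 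Thus reflective $\iff R = \langle R/1\rangle \iff$ disklike.

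For coreflective $\iff$ saturated: dually, coreflective factorization systems coincide with maxima of $\chi$-fibers. Given an interior operator $\chi$, I would propose the candidate maximum
\[
R^{\max}_\chi := \{(a,d) \in P \times P : \chi(d) \le a \le d\}
\]
and verify: (i) $R^{\max}_\chi$ is a transfer system---transitivity follows because $\chi(c) \le b$ combined with monotonicity and idempotence gives $\chi(c) = \chi(\chi(c)) \le \chi(b) \le a$, and pullback stability follows because $x \le z$ and $\chi(z) \le y$ yield $\chi(x) \le \chi(z) \le y$ and hence $\chi(x) \le x\wedge y$; (ii) $R^{\max}_\chi$ is saturated, immediate from its interval form; (iii) it is maximal in its fiber, since any $R'$ with $\chi^{R'} = \chi$ has pairs $(a,d)$ forcing $a \ge \chi(d)$; and (iv) any saturated transfer system $R$ equals $R^{\max}_{\chi^R}$, by pulling back $\chi^R(d)~R~d$ along $a \le d$ (with $\chi^R(d) \le a$) to obtain $\chi^R(d)~R~a$ and then applying saturation to $\chi^R(d)~R~a\le d$ together with $\chi^R(d)~R~d$ to conclude $a~R~d$. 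Hence coreflective $\iff R = R^{\max}_{\chi^R} \iff$ saturated.

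The main technical obstacle lies in step (i) above: verifying that $R^{\max}_\chi$ satisfies the transfer system axioms requires a careful simultaneous use of the three defining properties of an interior operator, with pullback stability in particular turning on the subtle passage from $\chi(z) \le y$ to $\chi(x) \le y$ via monotonicity along $x \le z$.
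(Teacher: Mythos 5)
Your combinatorial work is largely correct and in places more explicit than the paper's own (very terse) sketch: the description of $\langle S\rangle$ as the pullback closure $\{(b\wedge x,b): x\in S,\ b\in P\}$ of a $\wedge$-closed set $S\ni 1$, the observation that $\lambda$-fibers are exactly the fibers of $(L,R)\mapsto R/1$, and the interval characterization $R=\{(a,d):\chi^R(d)\le a\le d\}$ of saturated transfer systems (steps (i)--(iv)) are all sound and would be welcome additions. (Minor point: your ``main technical obstacle,'' the transfer-system axioms for $R^{\max}_\chi$, is actually routine.)

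The genuine gap is logical: your reduction of ``(co)reflective'' to ``extremal in its fiber'' is circular within this paper. You invoke \autoref{thm:fibers}, but the proof of \autoref{thm:fibers} relies on \autoref{thm:submonoid}, whose proof in turn cites \autoref{thm:refdisk} --- the very statement you are proving --- to identify the image of $F$ with the reflective factorization systems. \autoref{lemma:fiberreflect} alone only gives one direction (minimum of a fiber $\Rightarrow$ reflective, hence disklike $\Rightarrow$ reflective, and dually saturated $\Rightarrow$ coreflective); the converse, that every reflective system \emph{is} the minimum of its fiber, is exactly the bijectivity assertion of \autoref{thm:fibers} that rests on \autoref{thm:refdisk}. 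Nowhere do you engage with \autoref{defn:reflective} or \autoref{rmk:joinmeet} directly. What is needed --- and what the paper's sketch does, however briefly --- is an argument straight from the definition that $(L,R)$ is reflective if and only if it is determined by (equivalently, generated by) $R/1$, and dually for coreflectivity and $0\backslash L$. Once that equivalence is established independently, your explicit constructions of $\langle R/1\rangle$ and $R^{\max}_{\chi}$ complete the proof cleanly; as written, they characterize ``disklike'' and ``saturated'' but never actually touch ``reflective'' and ``coreflective.''
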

\begin{proof}[Proof sketch]
A factorization system $(L,R)$ is reflective if and only if it is determined by $R/1$, in which case $\langle R/1\rangle = R$ as a transfer system.

Similarly, $(L,R)$ is coreflective if and only if it is determined by $0\backslash L$, in which case the ``co-transfer system'' $L$ is generated by $0\backslash L$. The reader may check that this minimality condition on the left set corresponds to the maximality condition of saturation on the right set.
\end{proof}
\begin{corollary}\label{cor:satdisk}
Let $P$ be a finite lattice. Then under the duality
\[
\begin{aligned}
  \Tr P&\longrightarrow \Tr P^\op\\
  R&\longmapsto ({}^\perp R)^\op,
\end{aligned}
\]
saturated transfer systems on $P$ correspond to disklike factorization systems on $P^\op$.
\end{corollary}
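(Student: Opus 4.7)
The plan is to recognize the stated duality $R\mapsto ({}^\perp R)^\op$ on transfer systems as the composite of three bijections the paper has already built, and then to track the class of saturated transfer systems through this composite. First I would verify that the map in question factors as
\[
  \Tr P \xrightarrow{\sim} \Fac P \xrightarrow{\sim} \Fac P^\op \xrightarrow{\sim} \Tr P^\op,
\]
where the outer arrows are the isomorphisms of \autoref{thm:fooqw} and the middle arrow is the anti-isomorphism $(L,R)\mapsto (R^\op,L^\op)$ recorded in \autoref{prop:dualchar}. Tracing a transfer system $R$ through this chain yields
\[
  R \longmapsto ({}^\perp R,R) \longmapsto (R^\op, ({}^\perp R)^\op) \longmapsto ({}^\perp R)^\op,
\]
matching the formula in the statement and, as a side benefit, confirming that the assignment really does land in $\Tr P^\op$.

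With the duality identified as a composite of poset (anti-)isomorphisms, the corollary reduces to a bookkeeping exercise combining \autoref{lemma:refcoref} and \autoref{thm:refdisk}. Given a saturated transfer system $R$ on $P$, \autoref{thm:refdisk} tells us that $({}^\perp R,R)$ is a coreflective factorization system on $P$. Applying \autoref{lemma:refcoref} then shows that $(R^\op, ({}^\perp R)^\op)$ is a reflective factorization system on $P^\op$. A second invocation of \autoref{thm:refdisk}, now on $P^\op$, identifies this reflective system with a disklike transfer system on $P^\op$; this transfer system is exactly the right class $({}^\perp R)^\op$. Every implication in the chain is reversible, so this yields the claimed bijection.

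I do not anticipate any substantive obstacle: the corollary is essentially formal once \autoref{thm:refdisk}, \autoref{lemma:refcoref}, and \autoref{thm:fooqw} are in hand. The only place I would double-check is that the two interchanges of ``reflective vs.\ coreflective'' and ``saturated vs.\ disklike'' align correctly under $(-)^\op$ — \emph{i.e.}, that ``saturated on $P$'' matches with ``reflective on $P^\op$'' (and not accidentally with ``coreflective on $P^\op$''). \autoref{lemma:refcoref} supplies this pairing directly through the formula $(L,R)\leftrightarrow (R^\op,L^\op)$, so the argument should proceed without complication.
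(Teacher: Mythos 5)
Your proposal is correct and is essentially the paper's own argument: the paper's one-line proof is precisely "transport the coreflective/reflective duality $(L,R)\mapsto(R^\op,L^\op)$ through \autoref{thm:refdisk}," which is the composite $\Tr P\to\Fac P\to\Fac P^\op\to\Tr P^\op$ you spell out. Your more explicit tracing of $R\mapsto({}^\perp R,R)\mapsto(R^\op,({}^\perp R)^\op)\mapsto({}^\perp R)^\op$ and the careful check via \autoref{lemma:refcoref} that coreflective on $P$ pairs with reflective on $P^\op$ are exactly the bookkeeping the paper leaves implicit.
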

\begin{proof}
This is the duality between coreflective and reflective factorization systems transported to transfer systems via \autoref{thm:refdisk}.
\end{proof}

It is also the case that (co)reflective factorization systems encode certain submonoids of the commuatative idempotent monoids $(P,\vee)$ and $(P,\wedge)$. The following proof appears in unpublished work of Balchin--MacBrough--Ormsby.

\begin{theorem}\label{thm:submonoid}
The assignment $(L,R)\mapsto R/1$ is a bijection between reflective factorization systems and submonoids of $(P,\wedge)$; dually, $(L,R)\mapsto 0\backslash L$ is a bijection between coreflective factorization systems and submonoids of $(P,\vee)$.
\end{theorem}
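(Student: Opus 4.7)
My plan is to construct an explicit two-sided inverse to $(L,R)\mapsto R/1$ on reflective factorization systems, and then deduce the coreflective statement from the formal duality of \autoref{prop:dualchar}.

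First I would verify well-definedness: $R/1$ is a $\wedge$-submonoid of $P$ for any factorization system, not just a reflective one. Reflexivity of $R$ places $1\in R/1$, and for $x,y\in R/1$ the transfer system pullback axiom applied to $y~R~1$ along $x\le 1$ gives $x\wedge y~R~x$; composing with $x~R~1$ by transitivity yields $x\wedge y~R~1$. (Reflectivity plays no role in this step.)

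For the inverse, given $S\in\SubMon(P,\wedge)$, I would set $R_S:=\langle\{s~R~1 : s\in S\}\rangle$, the smallest transfer system containing these generators, and $\Psi(S):=({}^\perp R_S,R_S)$. The crux of the proof is the explicit description
\[
R_S=\{(x\wedge s,x) : x\in P,\ s\in S\}.
\]
One shows that the right-hand side contains the generators (take $x=1$) and all identities (take $s=1\in S$), is closed under pullback (the pullback of $x\wedge s~R~x$ along $z\le x$ is $z\wedge s~R~z$, still of the required form), and is closed under transitivity (composing $(x\wedge s)\wedge s'~R~x\wedge s$ with $x\wedge s~R~x$ rewrites as $x\wedge(s\wedge s')~R~x$, which lies in the set because $s\wedge s'\in S$ by meet-closure). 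From this formula one reads off $R_S/1=\{1\wedge s : s\in S\}=S$, and by construction $R_S=\langle R_S/1\rangle$, so $\Psi(S)$ is reflective.

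These maps are mutually inverse: $\Phi\Psi(S)=S$ follows at once from $R_S/1=S$, and $\Psi\Phi(L,R)=(L,R)$ on any reflective $(L,R)$ holds precisely because such a factorization system satisfies $R=\langle R/1\rangle$, the characterization of reflective factorization systems as minima of $\lambda$-fibers underlying \autoref{lemma:fiberreflect}. The coreflective statement then falls out of the anti-isomorphism $\Fac P\to\Fac P^{\op}$: it exchanges reflective with coreflective factorization systems (\autoref{lemma:refcoref}), identifies $R/1$ with $0\backslash L$, and matches $\SubMon(P^{\op},\wedge)$ with $\SubMon(P,\vee)$ via $\wedge_{P^{\op}}=\vee_P$. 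The main obstacle is the closure computation establishing the explicit formula for $R_S$; once it is secured, everything else reduces to routine bookkeeping.
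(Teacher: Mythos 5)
Your proof is correct, but it takes a genuinely different route from the paper's. The paper sets up a monotone Galois connection $(F,G)\colon 2^P\leftrightarrows \Fac P$ with $G(L,R)=R/1$ and $FS=({}^\perp\langle S/1\rangle,\langle S/1\rangle)$, invokes Ore's theorem that a Galois connection restricts to a bijection between the images, and then identifies those images (submonoids of $(P,\wedge)$ on one side, reflective factorization systems on the other, the latter via \autoref{thm:refdisk}). You instead build the inverse by hand, and the real content of your argument is the explicit formula $\langle S/1\rangle=\{(x\wedge s,x):x\in P,\ s\in S\}$, whose verification (contains generators and identities since $1\in S$, pullback-stable, transitive via meet-closure of $S$) is sound; the paper never writes this formula down, and it is genuinely useful, e.g.\ it immediately gives $R_S/1=S$ without appealing to the Galois machinery. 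Both proofs ultimately lean on the same external input --- that reflective is equivalent to $R=\langle R/1\rangle$ --- which the paper only establishes as \autoref{thm:refdisk}; you are on equal footing there. Two citation quibbles: your claim that $\Psi(S)$ is reflective ``by construction'' because $R_S=\langle R_S/1\rangle$ is really the disklike-implies-reflective direction of \autoref{thm:refdisk} (though you could avoid it entirely by noting $R_S/1=S$ is closed under $\wedge$ and citing \autoref{rmk:joinmeet}); and the fact that reflective implies $R=\langle R/1\rangle$ is \autoref{thm:refdisk}, not \autoref{lemma:fiberreflect}, which concerns minima of $\lambda$-fibers and does not directly supply the implication you need. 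Neither quibble affects the correctness of the argument.
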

\begin{proof}
By duality, we only need to handle the case of reflective factorization systems. We define an adjoint pair of functors (\emph{i.e.}, monotone Galois connection) $(F,G)\colon 2^P\leftrightarrows \Fac P$ between the lattice of subsets of $P$ and factorization systems on $P$ where
\[
  G(L,R) = \{x\in P\mid x~R~1\}.
\]
and
\[
  FS = ({}^\perp \langle S/1\rangle,\langle S/1\rangle).
\]
Since Galois connections induce Galois correspondences on images (\cite[Theorem 2]{ore}), we only need to identify the images of $G$ and $F$.

Clearly $1\in G(L,R)$, and since the right morphisms in a factorization system are stable under pullback in $P$, we see that $G(L,R)$ is also closed under the operation $\wedge$, making it a submonoid of $(P,\wedge)$. Given a submonoid $M$ of $(P,\wedge)$, one may check that
\[
  ({}^\perp \langle M/1\rangle, \langle M/1\rangle)
\]
maps to $M$ under $G$, so the image of $G$ is exactly the submonoids of $(P,\wedge)$.

Finally, by \autoref{thm:refdisk}, the image of $F$ is exactly the reflective factorization systems on $P$.
\end{proof}
\begin{remark}
Submonoids of $(P,\wedge)$ are sometimes called \textbf{Moore families}. Additionally, when $P = 2^S$ is the powerset lattice of a set $S$, such submonoids are sometimes called \textbf{closure systems} on $S$.

Closure systems are notoriously challenging to enumerate, and the total number of them is only known for $0\le \#S\le 7$. See OEIS A102896 and the papers \cite{Higuchi,HabibNourine,countingmoore}. Work of Kleitman \cite{Kleitman} shows that the base-2 logarithm of the number of closure systems on a cardinality $n$ set is asymptotic to the middle binomial coefficient $\binom{n}{\lfloor n/2\rfloor}$.
\end{remark}

The following result seems to be well-known, but is hard to track down in the literature.

\begin{theorem}\label{thm:clsubmon}
There is a bijection between $\clop P$ and $\SubMon(P,\wedge)$ given by $c\mapsto P^c = \{x\in P\mid cx=x\}$. Dually, there is a bijection between $\End^\circ P$ and $\SubMon(P,\vee)$ given by $i\mapsto P^i$.
\end{theorem}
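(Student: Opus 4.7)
The plan is to establish the bijection for closure operators and submonoids of $(P,\wedge)$; the dual statement for interior operators then follows by applying the result to $P^{\op}$, exchanging $\wedge$ and $\vee$.

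First I would verify that the assignment $c\mapsto P^c$ is well-defined into $\SubMon(P,\wedge)$. The identity element of $(P,\wedge)$ is $1$, and extensivity forces $1\le c1\le 1$, so $1\in P^c$. If $x,y\in P^c$, then monotonicity gives $c(x\wedge y)\le cx\wedge cy = x\wedge y$, while extensivity gives the reverse inequality, so $x\wedge y\in P^c$.

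Next I would construct an inverse $M\mapsto c_M$ by the formula
\[
  c_M(x) \;=\; \bigwedge\{y\in M\mid x\le y\}.
\]
Because $P$ is finite and $1\in M$, the defining set is nonempty and finite, so $c_M(x)$ exists and in fact lies in $M$ (as $M$ is closed under $\wedge$). Then I would verify the three defining properties of a closure operator: monotonicity is immediate because enlarging $x$ shrinks the indexing set; extensivity follows since $x$ is a lower bound of $\{y\in M\mid x\le y\}$; and idempotence holds because $c_M(x)\in M$ is itself the minimum element of $\{y\in M\mid c_M(x)\le y\}$.

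Finally I would check the two round trips. For $M\in\SubMon(P,\wedge)$, the containment $P^{c_M}\supseteq M$ is clear (each $x\in M$ is the minimum of its defining set), and $P^{c_M}\subseteq M$ because $c_M$ takes values in $M$. For a closure operator $c$, I would show $c_{P^c}=c$ pointwise: since $c(cx)=cx$ by idempotence and $x\le cx$ by extensivity, $cx$ lies in $\{y\in P^c\mid x\le y\}$, giving $c_{P^c}(x)\le cx$; conversely, any $y\in P^c$ with $x\le y$ satisfies $cx\le cy=y$ by monotonicity, so $cx\le c_{P^c}(x)$. There is no real obstacle here --- the only subtlety is remembering that a submonoid of $(P,\wedge)$ must contain the identity $1$, which is precisely what makes the infimum in the definition of $c_M$ well-defined and ensures $c_M$ is extensive.
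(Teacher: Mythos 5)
Your proposal is correct and follows essentially the same route as the paper: send $c$ to its fixed points and invert via $M\mapsto\bigl(x\mapsto\min\{y\in M\mid x\le y\}\bigr)$, which is exactly the paper's $f_A$. You simply fill in the verifications the paper leaves to the reader (including the point, which the paper glosses over, that $1\in P^c$), and your one-line argument $c(x\wedge y)\le cx\wedge cy=x\wedge y$ is a slightly cleaner version of the paper's greatest-lower-bound argument.
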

\begin{proof}
By duality, we only need to check the result on closure operators. We first check that if $c\in \clop P$, then $P^c$ is a submonoid of $(P,\wedge)$. Suppose $x,y\in P^c$ so that $cx=x,cy=y$. Since $x\wedge y\le x,y$ and $c$ is monotone, we have $c(x\wedge y)\le cx=x, cy=y$, so $c(x\wedge y)\le x\wedge y$. Suppose we have some other $z\le x,y$. Then $z\le x\wedge y$, so $cz\le c(x\wedge y)$. This demonstrates that $c(x\wedge y)$ is the greatest lower bound of $x,y$, \emph{i.e.},
\[
  c(x\wedge y) = x\wedge y,
\]
whence $P^c$ is a submonoid of $(P,\wedge)$.

We now construct an inverse to $c\mapsto P^c$. Given a submonoid $A$ of $(P,\wedge)$, define $f_A\colon P\to P$ by $f_Ax = \min\{y\in A\mid x\le y\}$. One may check by first principles that $f_A$ is monotone, extensive, and idempotent, so $f_A$ is a closure operator.  We leave it to the reader to check that for each closure operator $c$, $f_{P^c} = c$, and for each submonoid $A\le (P,\wedge)$, $P^{f_A} = A$.
\end{proof}

\begin{remark}\label{rmk:pB}
Don Knuth \cite{knuth:parades} has observed that subsemigroups of $([m]\times[n],\vee)$ are counted by the poly-Bernoulli number
\[
  B_{m+1,n+1} = \sum_{k\ge 0}(k!)^2{m+2\brace k+1}{n+2\brace k+1}.
\]
Since there are half as many submonoids as subsemigroups (via the forgetful and ``remove 0'' maps), we learn that
\[
  \#\SubMon([m]\times[n],\vee) = \frac{1}{2}B_{m+1,n+1}.
\]
Via Kaneko's original work \cite{kaneko1997poly} on poly-Bernoulli numbers, this also recovers the asymmetric formula for saturated transfer systems on $[m]\times[n]$ in the work of Hafeez--Marcus--Ormsby--Osorno \cite{hmoo}. Forthcoming work of the 2024 Reed Collaborative Mathematics Group \cite{cmrg2024} extends these enumerations to more general modular lattices.
\end{remark}

We conclude with two relatively minor observations which may nonetheless prove useful in future enumeration efforts. The first relates to (co)monads on a (category induced by a) finite lattice.

\begin{prop}\label{prop:monad}
Let $P$ be a lattice, which we also view as a bicomplete category. There is a bijection between monads on $P$ and closure operators on $P$; dually, there is a bijection between comonads on $P$ and interior operators on $P$.
\end{prop}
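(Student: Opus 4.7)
The plan is to unwind the definition of a monad in the poset-theoretic setting and observe that everything collapses to the conditions defining a closure operator (with the dual argument handling comonads and interior operators). Throughout, I will exploit the fact that in a poset category every hom-set has at most one element, so natural transformations amount to pointwise inequalities and coherence diagrams commute automatically.

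First I would recall that an endofunctor of a poset category $P$ is precisely a monotone map $T\colon P\to P$, since functoriality on objects requires preservation of the unique arrows $x\le y$. Next, I would translate the monad data: a unit $\eta\colon \id\Rightarrow T$ exists if and only if $x\le Tx$ for every $x\in P$ (extensivity); a multiplication $\mu\colon T^2\Rightarrow T$ exists if and only if $TTx\le Tx$ for every $x\in P$. Naturality of $\eta$ and $\mu$, as well as the associativity and unit axioms, are automatic in $P$ because any two parallel arrows in a poset are equal.

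The key observation is then that if $\eta$ and $\mu$ both exist, applying $\eta$ at $Tx$ yields $Tx\le TTx$, and combined with $\mu_x\colon TTx\le Tx$ we deduce $TTx=Tx$, i.e.\ idempotence. Conversely, given a closure operator $c$ (monotone, extensive, idempotent), extensivity supplies the unit, idempotence supplies the multiplication (in fact an equality), and there is no further data to specify, so the assignments $(T,\eta,\mu)\mapsto T$ and $c\mapsto (c,\eta^c,\mu^c)$ are mutually inverse bijections. The dual statement, identifying comonads on $P$ with interior operators, follows immediately by applying the same argument in $P^\op$ and noting that the self-duality of the lattice viewpoint exchanges extensivity with contractivity.

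The only mildly subtle point --- and the one I would handle with the most care --- is the uniqueness of $\eta$ and $\mu$ as natural transformations, which is what licenses us to discard them from the data of a monad on $P$. Once the hom-set triviality is invoked this is immediate, so in truth the proposition is essentially a bookkeeping exercise, and I would write it up as a short direct verification rather than invoke any heavier machinery.
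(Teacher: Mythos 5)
Your argument is correct and follows essentially the same route as the paper: identify endofunctors of $P$ with monotone maps, read the unit as extensivity and the multiplication as $TTx\le Tx$, and combine the two to obtain idempotence, with all coherence automatic in a poset. The paper's proof is a terser version of exactly this verification, so no further comparison is needed.
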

\begin{proof}
By duality, we only need to demonstrate the result for monads. A monad consists of an endofunctor $T\colon P\to P$ and natural transformations $\eta\colon \id_P\to T$ and $\mu\colon T\circ T\to T$. An endofunctor on $P$ is the same data as a monotone function on $P$, the natural transformation $\eta$ witnesses $\eta_x\colon x\le Tx$ (extensivity), and the natural transformation $\mu$ implies idempotence when combined with extensivity: \[\mu_x\colon TTx\le Tx\implies Tx\le TTx\le Tx\implies TTx=Tx.\]
\end{proof}

Finally, (co)reflective factorization systems may also be viewed as special types of model structures. Recall that in the Joyal--Tierney \cite{JT07} presentation of a model structure is an interval of weak factorization systems $(C,AF)\le (AC,F)$ for which $W := AF\circ AC$ satisfies 3-for-2. Call a model structure \textbf{fibrant} when $F = \mathrm{All}$, and call it \textbf{cofibrant} when $C = \mathrm{All}$.

\begin{prop}\label{prop:model}
Let $P$ be a finite poset. Coreflective factorization systems on $P$ are in bijection with fibrant model structures on $P$ via
\[
  (L,R)\longmapsto [(L,R)\le (\mathrm{Id}, \mathrm{All})].
\]
Reflective factorization systems on $P$ are in bijection with cofibrant model structures on $P$ via
\[
  (L,R)\longmapsto [(\mathrm{All},\mathrm{Id})\le (L,R)].
\]
\end{prop}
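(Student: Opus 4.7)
The plan is to establish the fibrant case directly and then obtain the cofibrant case by dualization. The key observation is that in any finite poset, ${}^\perp \mathrm{All} = \mathrm{Id}$: given a strict relation $a < b$, the commuting square with $p = (a \to b)$ on the right, $\mathrm{id}_a$ on top, and $\mathrm{id}_b$ on the bottom admits no diagonal, since there is no morphism $b \to a$. Consequently, any fibrant model structure $[(C, AF) \le (AC, F)]$ on $P$ has $AC = {}^\perp F = {}^\perp \mathrm{All} = \mathrm{Id}$, forcing the upper WFS to equal $(\mathrm{Id}, \mathrm{All})$. The entire interval is therefore determined by the lower WFS, and the proposed map $(L, R) \mapsto [(L, R) \le (\mathrm{Id}, \mathrm{All})]$ is manifestly injective with image precisely the fibrant intervals; what remains is to characterize which $(L,R) \in \Fac P$ yield a valid model structure.

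The 3-for-2 condition applies to $W := AF \circ AC = R \circ \mathrm{Id} = R$. For a composable triple $a \le b \le c$ in $P$, the implication that $a~R~b$ and $b~R~c$ give $a~R~c$ is just transitivity of the partial order $R$, and the implication that $b~R~c$ together with $a~R~c$ give $a~R~b$ follows from pullback stability applied to $a \le c$ and $b~R~c$, yielding $a = a \wedge b ~R~ b$. The only nontrivial implication is that $a~R~b$ and $a~R~c$ force $b~R~c$, which is precisely the saturation axiom of \autoref{defn:saturated}. Thus 3-for-2 on $R$ is equivalent to saturation, and by \autoref{thm:refdisk} saturated transfer systems correspond to coreflective factorization systems, completing the fibrant case.

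For the cofibrant case, I would invoke the anti-isomorphism $\Fac P \cong \Fac P^{\op}$ sending $(L, R)$ to $(R^{\op}, L^{\op})$. Under this duality the left and right classes of a WFS swap, so cofibrant model structures on $P$ (those with $C = \mathrm{All}$) correspond to fibrant model structures on $P^{\op}$. Combining this with \autoref{lemma:refcoref} (reflective on $P$ corresponds to coreflective on $P^{\op}$) and applying the fibrant case to $P^{\op}$ yields the claimed bijection between reflective factorization systems on $P$ and cofibrant model structures on $P$; matching the explicit form of the map $(L,R) \mapsto [(\mathrm{All}, \mathrm{Id}) \le (L, R)]$ is routine bookkeeping. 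No step presents a genuine obstacle, but two small pitfalls merit care: confirming that strict relations preclude membership in ${}^\perp \mathrm{All}$, and tracking opposite-poset conventions throughout the cofibrant argument.
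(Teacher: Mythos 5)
Your proof is correct and follows essentially the same route as the paper's: identify $W = AF\circ AC$ with $R$, equate 3-for-2 with saturation, and invoke \autoref{thm:refdisk} to pass to coreflective factorization systems, with the cofibrant case handled by duality. You additionally spell out what the paper leaves as ``straightforward to check'' --- namely that ${}^\perp\mathrm{All}=\mathrm{Id}$ forces the upper weak factorization system of a fibrant model structure, so the assignment is a bijection --- which is a welcome level of detail.
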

\begin{proof}
Suppose $(L,R)$ is coreflective. By \autoref{thm:refdisk}, we know that $R$ is a saturated transfer system, so $R\circ \mathrm{Id} = R$ satisfies 3-for-2. This makes $[(L,R)\le (\mathrm{Id}, \mathrm{All})]$ a fibrant model structure on $P$, and it is straightforward to check that this assignment may be reversed. The remainder of the theorem is dual.
\end{proof}

\bibliography{closure}
\bibliographystyle{alpha}

\end{document}